\date{}
\renewcommand{\uppercasenonmath}[1]{}
\numberwithin{equation}{section} \theoremstyle{plain}
\newtheorem{lem}{Lemma}[section]
\newtheorem{cor}[lem]{Corollary}
\newtheorem{prop}[lem]{Proposition}
\newtheorem{thm}[lem]{Theorem}
\newtheorem{cond}[lem]{Condition}
\newtheorem{definition}[lem]{Definition}
\newtheorem{Ex}[lem]{Example}
\newtheorem{Quest}[lem]{Question}
\newtheorem{Property}[lem]{Property}
\newtheorem{Properties}[lem]{Properties}
\newtheorem{Subprops}{}[lem]
\newtheorem{Para}[lem]{}
\newtheorem{rem}[lem]{Remark}
\newtheorem*{ack*}{ACKNOWLEDGEMENTS}
\newcommand{\pf}{\noindent\begin {proof}}
\newcommand{\epf}{\end{proof}}
\newcommand{\ra}{\rightarrow}
\begin{document}
\begin{center}
{\Large  \bf Avramov-Martsinkovsky type exact sequences for extriangulated categories}

\vspace{0.5cm}  Jiangsheng Hu,  Dongdong Zhang, Tiwei Zhao\footnote{Corresponding author. \\ Jiangsheng Hu was supported by the NSF of China (Grants Nos. 11671069, 11771212), Qing Lan Project of Jiangsu Province and Jiangsu Government Scholarship for Overseas Studies (JS-2019-328). Tiwei Zhao was supported by the NSF of China (Grants Nos. 11971225, 11901341), the project ZR2019QA015 supported by Shandong Provincial Natural Science Foundation, and the Young Talents Invitation Program of Shandong Province. Panyue Zhou was supported by the National Natural Science Foundation of China (Grant Nos. 11901190, 11671221),  the Hunan Provincial Natural Science Foundation of China (Grant No. 2018JJ3205) and  the Scientific Research Fund of Hunan Provincial Education Department (Grant No. 19B239).} and Panyue Zhou
\end{center}
\medskip
\medskip
\bigskip
\centerline { \bf  Abstract}
\medskip
\leftskip10truemm \rightskip10truemm \noindent
\hspace{1em}Let $(\mathcal{C},\mathbb{E},\mathfrak{s})$ be an extriangulated category with a proper class $\xi$ of
$\mathbb{E}$-triangles. In this paper, we first introduce the $\xi$-Gorenstein cohomology in terms of $\xi$-$\mathcal{G}$projective resolutions and $\xi$-$\mathcal{G}$injective coresolutions, respectively, and then we get the balance of $\xi$-Gorenstein cohomology. Moreover, we study the interplay among $\xi$-cohomology, $\xi$-Gorenstein cohomology and $\xi$-complete cohomology, and obtain the Avramov-Martsinkovsky type exact sequences in this setting.  \\[2mm]
{\bf Keywords:} Avramov-Martsinkovsky type exact sequence; $\xi$-Gorenstein cohomology;  $\xi$-complete cohomology; extriangulated category; proper class.\\
{\bf 2010 Mathematics Subject Classification:} 18E30; 18E10; 18G25; 55N20.

\leftskip0truemm \rightskip0truemm
\section { \bf Introduction}

Avramov and Martsinkovsky \cite{AM}  introduced relative and Tate cohomology theories for modules
of finite $G$-dimension, which were initially defined for representations of finite groups. They
made an intensive study of the interaction between the absolute, relative and Tate cohomology
theories. More precisely, they showed that absolute cohomology,
Gorenstein cohomology and Tate cohomology can be connected by a long exact sequence (see \cite[Theorem 7.1]{AM}). Ever since then several authors have studied these theories in different abelian categories (see \cite{Asadollahi2008,CJ2014,Iacob2005,ST,Zhang2017} for instance).

Beligiannis developed in \cite{Bel1} a relative version of homological algebra in triangulated categories in analogy to relative homological algebra in abelian categories, in which the notion of a proper class of exact sequences is replaced by that of a proper class of triangles. By specifying a class of triangles $\mathcal{E}$, which is called a proper class of triangles, he introduced $\mathcal{E}$-projective and $\mathcal{E}$-injective objects. In an attempt to extend the theory, Asadollahi and Salarian \cite{AS1} introduced and studied $\mathcal{E}$-Gorenstein projective,
$\mathcal{E}$-Gorenstein injective objects, and corresponding $\mathcal{E}$-Gorenstein
dimensions in triangulated categories by modifying what Enochs, Jenda \cite{EJ2}
and Holm \cite{hG} have done in the category of modules. Moreover, Tate cohomology theory in a triangulated category was developed in \cite{AS2}. Ren and Liu established  the global $\xi$-Gorenstein dimension for a triangulated category in \cite{RL1} by introducing
$\mathcal{E}$-Gorenstein cohomology groups $\mathcal{E}xt^{i}_{\mathcal{GP}}(-,-)$
 and $\mathcal{E}xt^{i}_{\mathcal{GI}}(-,-)$ for objects with finite
$\mathcal{E}$-Gorenstein dimension. Motivated by Avramov-Martsinkovsky type exact sequences constructed over a ring $R$ in \cite{AM}, Ren, Zhao and Liu \cite{RL3} proved that Beligiannis's $\mathcal{E}$-cohomology, Asadallahi
and Salarian's $\mathcal{E}$-Tate cohomology and Ren and Liu's Gorenstein cohomology can be connected by a long exact sequence.

The notion of extriangulated categories was introduced by Nakaoka and Palu in \cite{NP} as a simultaneous generalization of
exact categories and triangulated categories. Exact categories and extension closed subcategories of an
extriangulated category are extriangulated categories, while there exist some other examples of extriangulated categories which are neither exact nor triangulated, see \cite{HZZ,LN,NP,ZZ}. Hence many results on exact categories
and triangulated categories can be unified in the same framework.

Let $(\mathcal{C},\mathbb{E},\mathfrak{s})$  be an extriangulated category with a proper class $\xi$ of $\mathbb{E}$-triangles. The authors \cite{HZZ} studied a relative homological algebra in $\mathcal C$ which parallels the relative homological algebra in a triangulated category. By specifying a class of $\mathbb{E}$-triangles, which is called a proper class $\xi$ of
$\mathbb{E}$-triangles, the authors introduced $\xi$-projective dimensions and $\xi$-$\mathcal{G}$projective dimensions,
and discussed their properties. Recently, we studied $\xi$-cohomology in \cite{HZZ1} and developed a $\xi$-complete cohomology theory for an extriangulated category in \cite{HZZZ}, which extends Tate cohomology defined in the category of modules or in a triangulated category. The aim of this paper is to study Avramov-Martsinkovsky type exact sequences for extriangulated categories.

We now outline the results of the paper. In Section 2, we summarize some preliminaries
and basic facts about extriangulated categories which will be used throughout the paper.

From Section 3, we assume that $(\mathcal{C}, \mathbb{E}, \mathfrak{s})$ is an extriangulated category with enough $\xi$-projectives and enough $\xi$-injectives satisfying Condition (WIC). We first introduce $\xi$-Gorenstein cohomology in terms of $\xi$-$\mathcal{G}$projective resolutions and $\xi$-$\mathcal{G}$injective coresolutions, and then  prove that $\xi$-Gorenstein cohomology in $(\mathcal{C}, \mathbb{E}, \mathfrak{s})$ is
balanced (see Theorem \ref{thm:balance-of-gorenstein}).  Moreover, we show that
 there are two long exact sequences of $\xi$-Gorenstein cohomology under some certain conditions (see Propositions \ref{prop:long-exact1} and \ref{prop:long-exact2}).

In Section 4, we first recall some definitions and basic properties of $\xi$-complete cohomology in $(\mathcal{C}, \mathbb{E}, \mathfrak{s})$, and then  construct the Avramov-Martsinkovsky type
exact sequence in $(\mathcal{C}, \mathbb{E}, \mathfrak{s})$. More precisely, it is proved that $\xi$-cohomology, $\xi$-Gorenstein cohomology and $\xi$-complete cohomology can be connected by a long exact sequence, which generalizes Avramov-Martsinkovsky's result on a category of modules
and Ren-Zhao-Liu's result on a triangulated category and is new for  exact categories and extension-closed subcategories of triangulated categories
 (see Theorem \ref{AM} and Remark \ref{rem:4.5}).

\section{\bf Preliminaries}
Throughout this paper, we always assume that  $\mathcal{C}=(\mathcal{C}, \mathbb{E}, \mathfrak{s})$ is an extriangulated category and $\xi$ is a proper class of $\mathbb{E}$-triangles in  $\mathcal{C}$.  We also assume that the extriangulated category $\mathcal{C}$ has enough $\xi$-projectives and enough $\xi$-injectives satisfying Condition (WIC). Next we briefly recall some definitions and basic properties of extriangulated categories from \cite{NP}.
We omit some details here, but the reader can find them in \cite{NP}.

Let $\mathcal{C}$ be an additive category equipped with an additive bifunctor
$$\mathbb{E}: \mathcal{C}^{\rm op}\times \mathcal{C}\rightarrow {\rm Ab},$$
where ${\rm Ab}$ is the category of abelian groups. For any objects $A, C\in\mathcal{C}$, an element $\delta\in \mathbb{E}(C,A)$ is called an $\mathbb{E}$-extension.
Let $\mathfrak{s}$ be a correspondence which associates an equivalence class $$\mathfrak{s}(\delta)=\xymatrix@C=0.8cm{[A\ar[r]^x
 &B\ar[r]^y&C]}$$ to any $\mathbb{E}$-extension $\delta\in\mathbb{E}(C, A)$. This $\mathfrak{s}$ is called a {\it realization} of $\mathbb{E}$, if it makes the diagrams in \cite[Definition 2.9]{NP} commutative.
 A triplet $(\mathcal{C}, \mathbb{E}, \mathfrak{s})$ is called an {\it extriangulated category} if it satisfies the following conditions.
\begin{enumerate}
\item $\mathbb{E}\colon\mathcal{C}^{\rm op}\times \mathcal{C}\rightarrow \rm{Ab}$ is an additive bifunctor.

\item $\mathfrak{s}$ is an additive realization of $\mathbb{E}$.

\item $\mathbb{E}$ and $\mathfrak{s}$  satisfy the compatibility conditions in \cite[Definition 2.12]{NP}.

 \end{enumerate}

\begin{rem}
Note that both exact categories and triangulated categories are extriangulated categories $($see \cite[Example 2.13]{NP}$)$ and extension closed subcategories of extriangulated categories are
again extriangulated $($see \cite[Remark 2.18]{NP}$)$. Moreover, there exist extriangulated categories which
are neither exact categories nor triangulated categories $($see \cite[Proposition 3.30]{NP}, \cite[Example 4.14]{ZZ} and \cite[Remark 3.3]{HZZ}$)$.
\end{rem}

We will use the following terminology.

\begin{definition}{ \emph{(see \cite[Definitions 2.15 and 2.19]{NP})}} {\rm
 Let $(\mathcal{C}, \mathbb{E}, \mathfrak{s})$ be an extriangulated category.
\begin{enumerate}
\item A sequence $\xymatrix@C=1cm{A\ar[r]^x&B\ar[r]^{y}&C}$ is called a {\it conflation} if it realizes some $\mathbb{E}$-extension $\delta\in\mathbb{E}(C, A)$.
In this case, $x$ is called an {\it inflation} and $y$ is called a {\it deflation}.

\item  If a conflation $\xymatrix@C=0.6cm{A\ar[r]^x&B\ar[r]^{y}&C}$ realizes $\delta\in\mathbb{E}(C, A)$, we call the pair
$\xymatrix@C=0.6cm{(A\ar[r]^x&B\ar[r]^{y}&C, \delta)}$ an {\it $\mathbb{E}$-triangle}, and write it in the following.
\begin{center} $\xymatrix{A\ar[r]^x&B\ar[r]^{y}&C\ar@{-->}[r]^{\delta}&}$\end{center}
We usually do not write this ``$\delta$" if it is not used in the argument.

\item Let $\xymatrix{A\ar[r]^x&B\ar[r]^{y}&C\ar@{-->}[r]^{\delta}&}$ and $\xymatrix{A'\ar[r]^{x'}&B'\ar[r]^{y'}&C'\ar@{-->}[r]^{\delta'}&}$
be any pair of $\mathbb{E}$-triangles. If a triplet $(a, b, c)$ realizes $(a, c): \delta\rightarrow \delta'$, then we write it as
 $$\xymatrix{A\ar[r]^{x}\ar[d]_{a}&B\ar[r]^{y}\ar[d]_{b}&C\ar[d]_{c}\ar@{-->}[r]^{\delta}&\\
 A'\ar[r]^{x'}&B'\ar[r]^{y'}&C'\ar@{-->}[r]^{\delta'}&}$$
 and call $(a, b, c)$ a {\it morphism} of $\mathbb{E}$-triangles.
\end{enumerate}}

\end{definition}

The following condition is analogous to the weak idempotent completeness in exact category (see \cite[Condition 5.8]{NP}).

\begin{cond} \label{cond:4.11} \emph{({\rm Condition (WIC)})}  Consider the following conditions.
\begin{enumerate}
\item  Let $f\in\mathcal{C}(A, B), g\in\mathcal{C}(B, C)$ be any composable pair of morphisms. If $gf$ is an inflation, then so is $f$.

\item Let $f\in\mathcal{C}(A, B), g\in\mathcal{C}(B, C)$ be any composable pair of morphisms. If $gf$ is a deflation, then so is $g$.

\end{enumerate}
\end{cond}

\begin{Ex}\label{Ex:4.12}

\emph{(1)} If $\mathcal{C}$ is an exact category, then Condition \emph{(WIC)} is equivalent to that $\mathcal{C}$ is
weakly idempotent complete \emph{(see \cite[Proposition 7.6]{B"u})}.

\emph{(2)} If $\mathcal{C}$ is a triangulated category, then Condition \emph{(WIC)} is automatically satisfied.
\end{Ex}

\begin{lem}\label{lem1} \emph{(see \cite[Proposition 3.15]{NP})} Assume that $(\mathcal{C}, \mathbb{E},\mathfrak{s})$ is an extriangulated category. Let $C$ be any object, and let $\xymatrix@C=2em{A_1\ar[r]^{x_1}&B_1\ar[r]^{y_1}&C\ar@{-->}[r]^{\delta_1}&}$ and $\xymatrix@C=2em{A_2\ar[r]^{x_2}&B_2\ar[r]^{y_2}&C\ar@{-->}[r]^{\delta_2}&}$ be any pair of $\mathbb{E}$-triangles. Then there is a commutative diagram
in $\mathcal{C}$
$$\xymatrix{
    & A_2\ar[d]_{m_2} \ar@{=}[r] & A_2 \ar[d]^{x_2} \\
  A_1 \ar@{=}[d] \ar[r]^{m_1} & M \ar[d]_{e_2} \ar[r]^{e_1} & B_2\ar[d]^{y_2} \\
  A_1 \ar[r]^{x_1} & B_1\ar[r]^{y_1} & C   }
  $$
  which satisfies $\mathfrak{s}(y^*_2\delta_1)=\xymatrix@C=2em{[A_1\ar[r]^{m_1}&M\ar[r]^{e_1}&B_2]}$ and
  $\mathfrak{s}(y^*_1\delta_2)=\xymatrix@C=2em{[A_2\ar[r]^{m_2}&M\ar[r]^{e_2}&B_1]}$.
\end{lem}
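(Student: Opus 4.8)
The plan is to build $M$ as (the middle term of) a realization of the pullback of $\delta_1$ along the deflation $y_2$ --- morally, as a pullback of $B_1\to C\leftarrow B_2$ --- and then to recover the two $\mathbb{E}$-triangles through $M$ and the commutativity of the whole diagram from the octahedral axioms of \cite{NP}. Concretely: form the $\mathbb{E}$-extension $y_2^{*}\delta_1\in\mathbb{E}(B_2,A_1)$, fix a conflation $A_1\xrightarrow{m_1}M\xrightarrow{e_1}B_2$ realizing it, and use the compatibility of a realization with pullbacks of $\mathbb{E}$-extensions (\cite{NP}) to obtain a morphism of $\mathbb{E}$-triangles $(1_{A_1},e_2,y_2)$ from $(A_1\xrightarrow{m_1}M\xrightarrow{e_1}B_2,\,y_2^{*}\delta_1)$ to $(A_1\xrightarrow{x_1}B_1\xrightarrow{y_1}C,\,\delta_1)$. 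This at once gives the two lower rows of the asserted diagram, the morphisms $m_1,e_1,e_2$, the relations $e_2m_1=x_1$ and $y_1e_2=y_2e_1$, and the first realization equality $\mathfrak{s}(y_2^{*}\delta_1)=[A_1\xrightarrow{m_1}M\xrightarrow{e_1}B_2]$.

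Next I would apply the dual of the octahedral axiom (ET4) from \cite[Definition 2.12]{NP} to the composable deflations $M\xrightarrow{e_1}B_2\xrightarrow{y_2}C$, that is, to the $\mathbb{E}$-triangles $(A_1\xrightarrow{m_1}M\xrightarrow{e_1}B_2,\,y_2^{*}\delta_1)$ and $(A_2\xrightarrow{x_2}B_2\xrightarrow{y_2}C,\,\delta_2)$. This produces an object $N$, an $\mathbb{E}$-triangle $N\to M\xrightarrow{y_2e_1}C$, an $\mathbb{E}$-triangle $A_1\to N\to A_2$ with $\mathbb{E}$-extension $x_2^{*}(y_2^{*}\delta_1)=(y_2x_2)^{*}\delta_1=0$ (using $y_2x_2=0$), and a commutative diagram compatible with all previous data. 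Since that $\mathbb{E}$-extension vanishes, $A_1\to N\to A_2$ splits, so $N\cong A_1\oplus A_2$; the $A_2$-component of the map $N\to M$ then provides a morphism $m_2:A_2\to M$ with $e_1m_2=x_2$ (the top square), and $m_2$ is an inflation, being a composite of inflations. Applying (ET4) to the composable inflations $A_2\hookrightarrow A_1\oplus A_2\to M$ realizes $\coker(m_2)$ inside an $\mathbb{E}$-triangle $A_1\to\coker(m_2)\to C$ whose $\mathbb{E}$-extension is $\delta_1$ --- provided the splitting of $N$ was chosen so as to annihilate an obstruction living in the image of $\mathbb{E}(A_2,A_1)\to\mathbb{E}(C,A_1)$; hence $\coker(m_2)\cong B_1$ compatibly with $x_1$ and $y_1$. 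Transporting the corresponding $\mathbb{E}$-triangle along this isomorphism, and invoking the octahedral compatibilities together with $y_1^{*}\delta_1=0$, one identifies $A_2\xrightarrow{m_2}M\xrightarrow{e_2}B_1$ as a realization of $y_1^{*}\delta_2$ and checks that it sits in the diagram alongside the maps from the first step.

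I expect this second part to be the main obstacle: the construction of $M$ is a formal consequence of the realization axioms, but pinning down the second $\mathbb{E}$-triangle forces one to propagate $\mathbb{E}$-extensions carefully through two uses of the octahedral axioms and along the attendant pullbacks and pushouts, to choose the splitting of $N$ so that $\coker(m_2)$ is genuinely $B_1$ rather than merely some extension of $C$ by $A_1$, and to reconcile the deflation $M\to B_1$ arising there with the map $e_2$ from the first step. The vanishing $y_2x_2=0$ (equivalently $x_2^{*}y_2^{*}\delta_1=0$) is exactly what kills the obstruction.
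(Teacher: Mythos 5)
First, a remark on the comparison itself: the paper does not prove this lemma at all --- it is quoted verbatim from \cite[Proposition 3.15]{NP} --- so your proposal has to be measured against Nakaoka--Palu's argument rather than anything in this paper. Your first step is exactly the standard (and correct) opening: realizing $y_2^{*}\delta_1$ and invoking the definition of a realization of a pulled-back $\mathbb{E}$-extension yields $m_1$, $e_1$, $e_2$, the lower two rows of the diagram, and the equality $\mathfrak{s}(y_2^{*}\delta_1)=[A_1\xrightarrow{m_1}M\xrightarrow{e_1}B_2]$. Your second step also begins on the right track, and essentially along Nakaoka--Palu's lines: apply (ET4)$^{\mathrm{op}}$ to the composable deflations $e_1$ and $y_2$, use $x_2^{*}y_2^{*}\delta_1=(y_2x_2)^{*}\delta_1=0$ to split the resulting conflation $A_1\to N\to A_2$, and extract $m_2$ with $e_1m_2=x_2$ from a section $A_2\to N\cong A_1\oplus A_2$.

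The gap is that the proposal stops being a proof exactly at the crux, which you yourself flag but do not resolve. Concretely: (i) you never reconcile the deflation out of $M$ produced by your second octahedron (on $A_2\to N\to M$) with the map $e_2$ from the first step, so at best you obtain a conflation $A_2\to M\to Q$ with $Q$ abstractly isomorphic to $B_1$, not the single commutative diagram in which one map $e_2$ simultaneously satisfies $e_2m_1=x_1$, $y_1e_2=y_2e_1$ and the second realization equality; (ii) the ``obstruction living in the image of $\mathbb{E}(A_2,A_1)\to\mathbb{E}(C,A_1)$'' is not well defined as written, since there is no morphism $C\to A_2$ inducing such a map --- what is actually available is the exact sequence $\mathcal{C}(A_2,A_1)\to\mathbb{E}(C,A_1)\xrightarrow{y_2^{*}}\mathbb{E}(B_2,A_1)$, so the ambiguity from changing the section by $h\in\mathcal{C}(A_2,A_1)$ is $h_{*}\delta_2$, and to use this you must first check that $\delta_1$ and the class of $A_1\to Q\to C$ have the same image under $y_2^{*}$, which requires the $\mathbb{E}$-extension compatibilities of (ET4)$^{\mathrm{op}}$ that you have not written down; (iii) the target identity $\mathfrak{s}(y_1^{*}\delta_2)=[A_2\xrightarrow{m_2}M\xrightarrow{e_2}B_1]$ is only waved at (``invoking the octahedral compatibilities together with $y_1^{*}\delta_1=0$''), yet this bookkeeping of the compatibility conditions in (ET4)/(ET4)$^{\mathrm{op}}$ is precisely where the content of the lemma lives. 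In short: correct strategy and correct first half, but the second half is an accurate map of the difficulties rather than a resolution of them; as it stands it does not constitute a proof.
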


The following definitions are quoted verbatim from \cite[Section 3]{HZZ}. A class of $\mathbb{E}$-triangles $\xi$ is {\it closed under base change} if for any $\mathbb{E}$-triangle $$\xymatrix@C=2em{A\ar[r]^x&B\ar[r]^y&C\ar@{-->}[r]^{\delta}&\in\xi}$$ and any morphism $c\colon C' \to C$,  any $\mathbb{E}$-triangle  $\xymatrix@C=2em{A\ar[r]^{x'}&B'\ar[r]^{y'}&C'\ar@{-->}[r]^{c^*\delta}&}$ belongs to $\xi$.

Dually, a class of  $\mathbb{E}$-triangles $\xi$ is {\it closed under cobase change} if for any $\mathbb{E}$-triangle $$\xymatrix@C=2em{A\ar[r]^x&B\ar[r]^y&C\ar@{-->}[r]^{\delta}&\in\xi}$$ and any morphism $a\colon A \to A'$,  any $\mathbb{E}$-triangle  $\xymatrix@C=2em{A'\ar[r]^{x'}&B'\ar[r]^{y'}&C\ar@{-->}[r]^{a_*\delta}&}$ belongs to $\xi$.

A class of $\mathbb{E}$-triangles $\xi$ is called {\it saturated} if in the situation of Lemma \ref{lem1}, whenever  \\
$\xymatrix@C=2em{A_2\ar[r]^{x_2}&B_2\ar[r]^{y_2}&C\ar@{-->}[r]^{\delta_2 }&}$
 and $\xymatrix@C=2em{A_1\ar[r]^{m_1}&M\ar[r]^{e_1}&B_2\ar@{-->}[r]^{y_2^{\ast}\delta_1}&}$
 belong to $\xi$, then the  $\mathbb{E}$-triangle $$\xymatrix@C=2em{A_1\ar[r]^{x_1}&B_1\ar[r]^{y_1}&C\ar@{-->}[r]^{\delta_1 }&}$$  belongs to $\xi$.

An $\mathbb{E}$-triangle $\xymatrix@C=2em{A\ar[r]^x&B\ar[r]^y&C\ar@{-->}[r]^{\delta}&}$ is called {\it split} if $\delta=0$. It is easy to see that it is split if and only if $x$ is section or $y$ is retraction. The full subcategory  consisting of the split $\mathbb{E}$-triangles will be denoted by $\Delta_0$.

  \begin{definition} \emph{(see \cite[Definition 3.1]{HZZ})}\label{def:proper class} {\rm  Let $\xi$ be a class of $\mathbb{E}$-triangles which is closed under isomorphisms. Then $\xi$ is called a {\it proper class} of $\mathbb{E}$-triangles if the following conditions hold:

  \begin{enumerate}
\item  $\xi$ is closed under finite coproducts and $\Delta_0\subseteq \xi$.

\item $\xi$ is closed under base change and cobase change.

\item $\xi$ is saturated.

  \end{enumerate}}
  \end{definition}

 \begin{definition} \emph{(see \cite[Definition 4.1]{HZZ})}
 {\rm An object $P\in\mathcal{C}$  is called {\it $\xi$-projective}  if for any $\mathbb{E}$-triangle $\xymatrix{A\ar[r]^x& B\ar[r]^y& C \ar@{-->}[r]^{\delta}& }$ in $\xi$, the induced sequence of abelian groups $$\xymatrix@C=0.6cm{0\ar[r]& \mathcal{C}(P,A)\ar[r]& \mathcal{C}(P,B)\ar[r]&\mathcal{C}(P,C)\ar[r]& 0}$$ is exact. Dually, we have the definition of {\it $\xi$-injective} objects.}
\end{definition}

We denote by $\mathcal{P(\xi)}$ (resp. $\mathcal{I(\xi)}$) the class of $\xi$-projective (resp. $\xi$-injective) objects of $\mathcal{C}$. It follows from the definition that this subcategory $\mathcal{P}(\xi)$ and $\mathcal{I}(\xi)$ are full, additive, closed under isomorphisms and direct summands.

 An extriangulated  category $(\mathcal{C}, \mathbb{E}, \mathfrak{s})$ is said to  have {\it  enough
$\xi$-projectives} \ (resp. {\it  enough $\xi$-injectives}) provided that for each object $A$ there exists an $\mathbb{E}$-triangle $\xymatrix@C=2.1em{K\ar[r]& P\ar[r]&A\ar@{-->}[r]& }$ (resp. $\xymatrix@C=2em{A\ar[r]& I\ar[r]& K\ar@{-->}[r]&}$) in $\xi$ with $P\in\mathcal{P}(\xi)$ (resp. $I\in\mathcal{I}(\xi)$).

The {\it $\xi$-projective dimension} $\xi$-${\rm pd} A$ of $A\in\mathcal{C}$ is defined inductively.
 If $A\in\mathcal{P}(\xi)$, then define $\xi$-${\rm pd} A=0$.
Next if $\xi$-${\rm pd} A>0$, define $\xi$-${\rm pd} A\leq n$ if there exists an $\mathbb{E}$-triangle
 $K\to P\to A\dashrightarrow$  in $\xi$ with $P\in \mathcal{P}(\xi)$ and $\xi$-${\rm pd} K\leq n-1$.
Finally we define $\xi$-${\rm pd} A=n$ if $\xi$-${\rm pd} A\leq n$ and $\xi$-${\rm pd} A\nleq n-1$. Of course we set $\xi$-${\rm pd} A=\infty$, if $\xi$-${\rm pd} A\neq n$ for all $n\geq 0$.

Dually we can define the {\it $\xi$-injective dimension}  $\xi$-${\rm id} A$ of an object $A\in\mathcal{C}$.

We denote by $\widetilde{{\mathcal{P}}}(\xi)$ (resp. $\widetilde{{\mathcal{I}}}(\xi)$) the full subcategory of $\mathcal{C}$ whose objects have finite $\xi$-projective (resp. $\xi$-injective) dimension.

\begin{definition} \emph{(see \cite[Definition 4.4]{HZZ})}
{\rm A {\it $\xi$-exact} complex $\mathbf{X}$ is a diagram $$\xymatrix@C=2em{\cdots\ar[r]&X_1\ar[r]^{d_1}&X_0\ar[r]^{d_0}&X_{-1}\ar[r]&\cdots}$$ in $\mathcal{C}$ such that for each integer $n$, there exists an $\mathbb{E}$-triangle $\xymatrix@C=2em{K_{n+1}\ar[r]^{g_n}&X_n\ar[r]^{f_n}&K_n\ar@{-->}[r]^{\delta_n}&}$ in $\xi$ and $d_n=g_{n-1}f_n$.
}\end{definition}

\begin{definition} \emph{(see \cite[Definition 4.5]{HZZ})}
{\rm Let $\mathcal{W}$ be a class of objects in $\mathcal{C}$. An $\mathbb{E}$-triangle
$$\xymatrix@C=2em{A\ar[r]& B\ar[r]& C\ar@{-->}[r]& }$$ in $\xi$ is said to be
{\it $\mathcal{C}(-,\mathcal{W})$-exact} (resp.
{\it $\mathcal{C}(\mathcal{W},-)$-exact}) if for any $W\in\mathcal{W}$, the induced sequence of abelian groups
\begin{gather*}
  \xymatrix@C=2em{0\ar[r]&\mathcal{C}(C,W)\ar[r]&\mathcal{C}(B,W)\ar[r]&\mathcal{C}(A,W)\ar[r]& 0} \\
  (\mbox{resp. } \xymatrix@C=2em{0\ar[r]&\mathcal{C}(W,A)\ar[r]&\mathcal{C}(W,B)\ar[r]&\mathcal{C}(W,C)\ar[r]&0})
\end{gather*}
 is exact in ${\rm Ab}$}.
\end{definition}

\begin{definition} \emph{(see \cite[Definition 4.6]{HZZ})}
 {\rm Let $\mathcal{W}$ be a class of objects in $\mathcal{C}$. A complex $\mathbf{X}$ is called {\it $\mathcal{C}(-,\mathcal{W})$-exact} (resp.
{\it $\mathcal{C}(\mathcal{W},-)$-exact}) if it is a $\xi$-exact complex
$$\xymatrix@C=2em{\cdots\ar[r]&X_1\ar[r]^{d_1}&X_0\ar[r]^{d_0}&X_{-1}\ar[r]&\cdots}$$ in $\mathcal{C}$ such that  there is a $\mathcal{C}(-,\mathcal{W})$-exact (resp.
 $\mathcal{C}(\mathcal{W},-)$-exact) $\mathbb{E}$-triangle $$\xymatrix@C=2em{K_{n+1}\ar[r]^{g_n}&X_n\ar[r]^{f_n}&K_n\ar@{-->}[r]^{\delta_n}&}$$ in $\xi$ for each integer $n$ and $d_n=g_{n-1}f_n$.

 A $\xi$-exact complex $\mathbf{X}$ is called {\it complete $\mathcal{P}(\xi)$-exact} (resp. {\it complete $\mathcal{I}(\xi)$-exact}) if it is $\mathcal{C}(-,\mathcal{P}(\xi))$-exact (resp.
 $\mathcal{C}(\mathcal{I}(\xi),-)$-exact).}
\end{definition}

\begin{definition} \emph{(see \cite[Definition 4.7]{HZZ})}
 {\rm A  {\it complete $\xi$-projective resolution}  is a complete $\mathcal{P}(\xi)$-exact complex\\ $$\xymatrix@C=2em{\mathbf{P}:\cdots\ar[r]&P_1\ar[r]^{d_1}&P_0\ar[r]^{d_0}&P_{-1}\ar[r]&\cdots}$$ in $\mathcal{C}$ such that $P_n$ is $\xi$-projective for each integer $n$. Dually,   a  {\it complete $\xi$-injective coresolution}  is a complete $\mathcal{I}(\xi)$-exact complex $$\xymatrix@C=2em{\mathbf{I}:\cdots\ar[r]&I_1\ar[r]^{d_1}&I_0\ar[r]^{d_0}&I_{-1}\ar[r]&\cdots}$$ in $\mathcal{C}$ such that $I_n$ is $\xi$-injective for each integer $n$.}
\end{definition}

\begin{definition} \emph{(see \cite[Definition 4.8]{HZZ})}
{\rm  Let $\mathbf{P}$ be a complete $\xi$-projective resolution in $\mathcal{C}$. So for each integer $n$, there exists a $\mathcal{C}(-, \mathcal{P}(\xi))$-exact $\mathbb{E}$-triangle $\xymatrix@C=2em{K_{n+1}\ar[r]^{g_n}&P_n\ar[r]^{f_n}&K_n\ar@{-->}[r]^{\delta_n}&}$ in $\xi$. The objects $K_n$ are called {\it $\xi$-$\mathcal{G}$projective} for each integer $n$. Dually if  $\mathbf{I}$ is a complete $\xi$-injective  coresolution in $\mathcal{C}$, there exists a  $\mathcal{C}(\mathcal{I}(\xi), -)$-exact $\mathbb{E}$-triangle $\xymatrix@C=2em{K_{n+1}\ar[r]^{g_n}&I_n\ar[r]^{f_n}&K_n\ar@{-->}[r]^{\delta_n}&}$ in $\xi$ for each integer $n$. The objects $K_n$ are called {\it $\xi$-$\mathcal{G}$injective} for each integer $n$.}
\end{definition}

 We denote by $\mathcal{GP}(\xi)$ (resp. $\mathcal{GI}(\xi)$) the class of $\xi$-$\mathcal{G}$projective (resp. $\xi$-$\mathcal{G}$injective) objects.
It is obvious that $\mathcal{P(\xi)}$ $\subseteq$ $\mathcal{GP}(\xi)$ and $\mathcal{I(\xi)}$ $\subseteq$ $\mathcal{GI}(\xi)$.

\begin{definition} \emph{(see \cite[Definition 3.1]{HZZ1})}\label{df:resolution} {\rm Let $M$ be an object in $\mathcal{C}$. A {\it $\xi$-projective resolution} of $M$ is a $\xi$-exact complex $\mathbf{P}\rightarrow M$ such that $\mathbf{P}_n\in{\mathcal{P}(\xi)}$ for all $n\geq0$. Dually, a {\it $\xi$-injective coresolution} of $M$ is a $\xi$-exact complex $ M\rightarrow \mathbf{I}$ such that $\mathbf{I}_n\in{\mathcal{I}(\xi)}$ for all $n\leq0$.}
\end{definition}

\begin{definition} \emph{(see \cite[Definition 3.2]{HZZ1})}\label{df:derived-functors} {\rm Let $M$ and $N$ be objects in $\mathcal{C}$.
\begin{enumerate}
\item[{\rm (1)}] If we choose a $\xi$-projective resolution $\xymatrix@C=2em{\mathbf{P}\ar[r]& M}$ of  $M$, then for any integer $n\geq 0$, the \emph{$\xi$-cohomology group} $\xi{\rm xt}_{\mathcal{P}(\xi)}^n(M,N)$ are defined as
$$\xi{\rm xt}_{\mathcal{P}(\xi)}^n(M,N)=H^n({\mathcal{C}}(\mathbf{P},N)).$$
\item[{\rm (2)}] If we choose a
$\xi$-injective coresolution $\xymatrix@C=2em{N\ar[r]&\mathbf{I}}$ of  $N$, then for any integer $n\geq 0$, the \emph{$\xi$-cohomology group} $\xi{\rm xt}_{\mathcal{I}(\xi)}^n(M,N)$ are defined as $$\xi{\rm xt}_{\mathcal{I}(\xi)}^n(M,N)=H^n({\mathcal{C}}(M, \mathbf{I})).$$
\end{enumerate}}
\end{definition}

\begin{rem}\label{fact:2.5'} { ${\rm \xi xt}_{\mathcal{P}(\xi)}^n(-,-)$ and ${\rm \xi xt}_{\mathcal{I}(\xi)}^n(-,-)$ are cohomological functors for any integer $n\geq 0$, independent of the choice of $\xi$-projective resolutions and $\xi$-injective coresolutions, respectively. In fact, with the modifications of the usual proof, one obtains the isomorphism $\xi{\rm xt}_{\mathcal{P}(\xi)}^n(M,N)\cong \xi{\rm xt}_{\mathcal{I}(\xi)}^n(M,N),$
which is denoted by $\xi{\rm xt}_{\xi}^n(M,N).$
}
\end{rem}

\section{\bf $\xi$-Gorenstein cohomology}

Let $M\in\mathcal{C}$ and $\xymatrix@C=0.5cm{K\ar[r]&G\ar[r]^f&M\ar@{-->}[r]&}$ be an $\mathbb{E}$-triangle. We call the morphism $f$ a $\xi$-$\mathcal{G}$projective precover of $M$ if  $G\in\mathcal{GP}(\xi)$ and this $\mathbb{E}$-triangle is $\mathcal{C}(\mathcal{GP}(\xi),-)$-exact.

Let $M\in\mathcal{C}$. A $\xi$-exact complex $\mathbf{G}\to M$: $$\cdots\to G_2\to G_1\to G_0\to M\to 0$$ is called a \emph{$\xi$-$\mathcal{G}$projective resolution} of
$M$ if each $f_i$ is a $\xi$-$\mathcal{G}$projective precover of $K_i$ in the relevant $\mathbb{E}$-triangle $\xymatrix@C=0.5cm{K_{i+1}\ar[r]&G_i\ar[r]^{f_i}&K_i\ar@{-->}[r]&}$ (with $K_0=M$) for $i\geq 0$.

A $\xi$-$\mathcal{G}$projective resolution $\mathbf{G}\to M$ is said to be of length $n$ if $G_n\neq 0$ and $G_i=0$ for all $i> n$.

Assume $\xi$-$\mathcal{G}$pd$M=n<\infty$. By \cite[Proposition 5.5]{HZZ}, there is an $\mathbb{E}$-triangle $\xymatrix@C=0.5cm{K_1\ar[r]&G_0\ar[r]^{f_0}&M\ar@{-->}[r]&}$ in $\xi$ with $G_0\in\mathcal{GP}(\xi)$ and $\xi$-pd$K_1\leq n-1$. In particular,
$f_0$ is a $\xi$-$\mathcal{G}$projective precover of $M$. Inductively, we can get a $\xi$-$\mathcal{G}$projective resolution of length $n$ for $M$.

The notions of $\xi$-$\mathcal{G}$injective preenvelopes and $\xi$-$\mathcal{G}$injective coresolutions are given dually.

\begin{definition} {\rm Let $M,N\in\mathcal{C}$.
\begin{itemize}
  \item [(1)] Assume that $M$ admits a $\xi$-$\mathcal{G}$projective resolution $\mathbf{G}\to M$. For any integer $i\geq 0$, we define
$$
\xi{\rm xt}^i_{\mathcal{GP}(\xi)}(M,N)={\rm H}^i\mathcal{C}(\mathbf{G},N).
$$
  \item [(2)] Assume that $N$ admits a $\xi$-$\mathcal{G}$injective coresolution $N\to \mathbf{E}$. For any integer $i\geq 0$, we define
$$
\xi{\rm xt}^i_{\mathcal{GI}(\xi)}(M,N)={\rm H}^i\mathcal{C}(M,\mathbf{E}).
$$
\end{itemize}
}
\end{definition}

\begin{lem}
Let $M,M'\in\widetilde{\mathcal{GP}}(\xi)$. Consider $\xi$-projective resolutions $\pi:\mathbf{P}\to M$ and $\pi':\mathbf{P}'\to M'$, and
$\xi$-$\mathcal{G}$projective resolutions $\vartheta: \mathbf{G}\to M$ and $\vartheta': \mathbf{G}'\to M'$. Then
\begin{itemize}
  \item[(1)] there exist unique up to homotopy morphisms $\gamma: \mathbf{P}\to \mathbf{G}$ and $\gamma': \mathbf{P}'\to \mathbf{G}'$ such that $\pi=\vartheta\gamma$ and $\pi'=\vartheta'\gamma'$
  \item[(2)] for any morphism $\alpha: M\to M'$, there is a unique up to homotopy morphism $\tau: \mathbf{G}\to \mathbf{G}'$ such that the right square of the diagram
      \begin{equation}\label{homotopy}
      \begin{split}
        \xymatrix{\mathbf{P}\ar[r]^\gamma\ar[d]^{\tau'}&\mathbf{G}\ar[r]^\vartheta\ar[d]^{\tau}&M\ar[d]^{\alpha}\\
      \mathbf{P}'\ar[r]^{\gamma'}&\mathbf{G}'\ar[r]^{\vartheta'}&M'}
      \end{split}
      \end{equation}
      is commutative. Moreover, for each choice of $\tau$, there exists a unique up to homotopy morphism $\tau':\mathbf{P}\to \mathbf{P}'$ making the left square commute up to homotopy.
\end{itemize}
\end{lem}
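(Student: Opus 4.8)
The plan is to imitate the classical comparison theorem for projective resolutions, but carried out in the extriangulated setting, using that $\mathbb{G}$-projective precovers are $\mathcal{C}(\mathcal{GP}(\xi),-)$-exact and that $\mathcal{P}(\xi)\subseteq\mathcal{GP}(\xi)$. The key mechanism is a dimension-shifting/lifting lemma for $\mathbb{E}$-triangles: given an $\mathbb{E}$-triangle $\xymatrix@C=1.2em{K\ar[r]&P\ar[r]^{p}&L\ar@{-->}[r]&}$ in $\xi$ with $P\in\mathcal{P}(\xi)$, any morphism $L\to L'$ and any $\mathcal{C}(\mathcal{GP}(\xi),-)$-exact (in particular $\mathcal{C}(\mathcal{P}(\xi),-)$-exact) $\mathbb{E}$-triangle $\xymatrix@C=1.2em{K'\ar[r]&G'\ar[r]^{q}&L'\ar@{-->}[r]&}$, the composite $P\to L\to L'$ lifts through $q$ because $\mathcal{C}(P,-)$ sends $q$ to an epimorphism; this yields the middle map $P\to G'$ and, via the universal property of the pullback/base-change triangle (Lemma \ref{lem1} type argument) together with Condition (WIC), the induced map $K\to K'$ on kernels. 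Iterating this over the two resolutions produces the chain maps; uniqueness up to homotopy comes from applying the same lifting to the difference of two such chain maps, using that the relevant $\mathbb{E}$-triangles are split-exact on the level of the $\xi$-projective source after one dimension shift.

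Concretely, for part (1): build $\gamma:\mathbf{P}\to\mathbf{G}$ inductively. In degree $0$, the deflation $\vartheta_0:G_0\to M$ is a $\xi$-$\mathcal{G}$projective precover, hence $\mathcal{C}(\mathcal{GP}(\xi),-)$-exact; since $P_0\in\mathcal{P}(\xi)\subseteq\mathcal{GP}(\xi)$, the map $\pi_0:P_0\to M$ factors as $\vartheta_0\gamma_0$. Passing to syzygies $\Omega M$ (the left-hand term of the defining $\mathbb{E}$-triangle of $P_0\to M$, resp. $G_0\to M$), the comparison problem reduces by one degree and one applies the same argument, noting that the syzygy $\mathbb{E}$-triangle in $\mathbf{G}$ stays $\mathcal{C}(\mathcal{GP}(\xi),-)$-exact and the source stays $\xi$-projective. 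Thus $\gamma$ exists; the same for $\gamma'$, $\pi'=\vartheta'\gamma'$. Uniqueness up to homotopy: if $\gamma,\bar\gamma$ both lift $\pi$, then $\gamma_0-\bar\gamma_0$ has image in $\Omega M$ inside $G_0$; because $P_0$ is $\xi$-projective and the relevant $\mathbb{E}$-triangle $\Omega M\to G_0\to M$ lies in $\xi$, one constructs the contracting homotopy degree by degree exactly as in the module case, using that $\mathcal{C}(P_i,-)$ is exact on $\xi$-triangles.

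For part (2): first lift $\alpha:M\to M'$ to $\tau:\mathbf{G}\to\mathbf{G}'$ by the identical induction, now using that each $\vartheta'_i$ (resp. its syzygy deflations) is a $\mathcal{C}(\mathcal{GP}(\xi),-)$-exact $\mathbb{E}$-triangle and each $G_i\in\mathcal{GP}(\xi)$, so $\mathcal{C}(G_i,-)$ is exact on it; uniqueness up to homotopy as before. The commutativity $\vartheta'\tau=\alpha\vartheta$ of the right square holds by construction in degree $0$ and is propagated. Then set $\tau':=\gamma'^{-1}$-direction: more precisely, $\gamma'\tau'\simeq\tau\gamma$ should define $\tau'$, but since $\gamma'$ is not invertible one instead lifts directly: $\tau\gamma:\mathbf{P}\to\mathbf{G}'$ together with $\gamma':\mathbf{P}'\to\mathbf{G}'$ and the fact that $\mathbf{P}'$ is a $\xi$-projective resolution of $M'$ while $\mathbf{P}$ has $\xi$-projective terms lets one lift $\tau\gamma$ through $\gamma'$ up to homotopy, producing $\tau':\mathbf{P}\to\mathbf{P}'$ with $\gamma'\tau'\simeq\tau\gamma$, hence the left square commutes up to homotopy; uniqueness up to homotopy of $\tau'$ is the comparison-theorem uniqueness for the genuine $\xi$-projective resolution $\mathbf{P}'$.

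**The main obstacle** I anticipate is the careful handling of the induced maps on syzygies: in an extriangulated category the syzygy is only defined up to the choice of $\mathbb{E}$-triangle, and extracting the map $K\to K'$ covering a given $P\to G'$ over $L\to L'$ requires invoking the functoriality of $\mathbb{E}$-extensions (a morphism of $\mathbb{E}$-triangles from $(a,c):\delta\to\delta'$ produces the compatible $b$, and here we are given $b$ and $c$ and need $a$, which uses the dual functoriality plus Condition (WIC) to see $K\to K'$ is well-defined up to the ambiguity absorbed by homotopy). Making the homotopies genuinely well-defined — rather than merely existing degreewise — and checking they assemble into a chain homotopy in the $\xi$-exact sense is the technical heart; everything else is a faithful transcription of the abelian-category comparison theorem, and the needed exactness of $\mathcal{C}(\mathcal{GP}(\xi),-)$ on the defining triangles is exactly what the definition of $\xi$-$\mathcal{G}$projective resolution supplies.
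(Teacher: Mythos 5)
Your proposal is correct and follows essentially the same route as the paper: existence and uniqueness of $\gamma,\gamma',\tau,\tau'$ come from the standard comparison theorem, using that each $P_i\in\mathcal{P}(\xi)\subseteq\mathcal{GP}(\xi)$ and that the defining $\mathbb{E}$-triangles of a $\xi$-$\mathcal{G}$projective resolution are $\mathcal{C}(\mathcal{GP}(\xi),-)$-exact, and the left square is handled by observing that $\tau\gamma$ and $\gamma'\tau'$ are both lifts of $\alpha$ into $\mathbf{G}'$ from a complex of $\xi$-projectives, hence homotopic --- which is exactly the explicit null-homotopy the paper constructs. The only cosmetic imprecision is the phrase ``lift $\tau\gamma$ through $\gamma'$'': $\tau'$ is really obtained by the ordinary comparison theorem over $\alpha$, after which the homotopy $\gamma'\tau'\simeq\tau\gamma$ is the uniqueness statement you already invoke; also, the syzygy-functoriality worry you flag is absorbed by the left-exactness of $\mathcal{C}(X,-)$ on conflations, so no appeal to Condition (WIC) is needed there.
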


\begin{proof}
Using standard arguments
from homological algebra, one can prove the corresponding version of the comparison
theorem for $\xi$-projective resolutions and  $\xi$-$\mathcal{G}$projective resolutions, that is, there are unique up to homotopy morphisms
$\tau':\mathbf{P}\to \mathbf{P}'$ and $\tau:\mathbf{G}\to \mathbf{G}'$ making the following diagrams
$$
\xymatrix{\mathbf{P}\ar[r]^{\pi}\ar[d]^{\tau'}&M\ar[d]^{\alpha}\\
\mathbf{P}'\ar[r]^{\pi'}&M'} \ \ \ \
\xymatrix{\mathbf{G}\ar[r]^{\vartheta}\ar[d]^{\tau}&M\ar[d]^{\alpha}\\
\mathbf{G}'\ar[r]^{\vartheta'}&M'}
$$
commute. Similarly, there are unique up to homotopy morphisms
$\gamma:\mathbf{P}\to \mathbf{G}$ and $\gamma':\mathbf{P}'\to \mathbf{G}'$ making the following diagrams
$$
\xymatrix{\mathbf{P}\ar[r]^{\pi}\ar[d]^{\gamma}&M\ar@{=}[d]\\
\mathbf{G}\ar[r]^{\vartheta}&M} \ \ \ \
\xymatrix{\mathbf{P}'\ar[r]^{\pi'}\ar[d]^{\gamma'}&M'\ar@{=}[d]\\
\mathbf{G}'\ar[r]^{\vartheta'}&M'}
$$
commute, i.e. (1) holds.

We next show that the left square of (\ref{homotopy}) is commutative up to homotopy. Firstly, we have a commutative diagram
$$
\xymatrix@C=1.5cm{\cdots\ar[r]&P_2\ar[r]^{d_2^{\mathbf{P}}}\ar[d]^{\tau_2\gamma_2-\gamma_2'\tau_2'}&
P_1\ar[r]^{d_1^{\mathbf{P}}}\ar[d]^{\tau_1\gamma_1-\gamma_1'\tau_1'}&P_0\ar[r]\ar[d]^{\tau_0\gamma_0-\gamma_0'\tau_0'}&0\\
\cdots\ar[r]&G_2'\ar[r]^{d_2^{\mathbf{G}'}}&
G_1'\ar[r]^{d_1^{\mathbf{G}'}}&G_0'\ar[r]&0.
}
$$
Note that for the $\xi$-$\mathcal{G}$projective resolution $\vartheta': \mathbf{G}'\to M'$, there are $\mathcal{C}(\mathcal{GP},-)$-exact $\mathbb{E}$-triangles
$\xymatrix@C=0.5cm{H_{i+1};\ar[r]^{u_i}&G_i'\ar[r]^{v_i}&H_i'\ar@{-->}[r]&
}$ such that $d_i^{\mathbf{G}'}=u_{i-1}v_i$ and $H_0'=M'$. Consider an exact sequence
$$
\xymatrix{0\ar[r]&\mathcal{C}(P_0,H_1')\ar[r]^{\mathcal{C}(P_0,u_0)}&\mathcal{C}(P_0,G_0')\ar[r]^{\mathcal{C}(P_0,v_0)}&\mathcal{C}(P_0,M')\ar[r]&0.
}
$$
Since $\mathcal{C}(P_0,v_0)(\tau_0\gamma_0-\gamma_0'\tau_0')=v_0(\tau_0\gamma_0-\gamma_0'\tau_0')=0$, there is $t_0\in\mathcal{C}(P_0,H_1')$ with
$u_0t_0=\mathcal{C}(P_0,u_0)(t_0)=\tau_0\gamma_0-\gamma_0'\tau_0'$. Moreover, by the exact sequence
$$
\xymatrix{0\ar[r]&\mathcal{C}(P_0,H_2')\ar[r]^{\mathcal{C}(P_0,u_1)}&\mathcal{C}(P_0,G_1')\ar[r]^{\mathcal{C}(P_0,v_1)}&\mathcal{C}(P_0,H_1')\ar[r]&0.
}
$$
there is $s_0\in\mathcal{C}(P_0,G_1')$ with $v_1s_0=\mathcal{C}(P_0,v_1)(s_0)=t_0$. Thus $\tau_0\gamma_0-\gamma_0'\tau_0'=u_0v_1s_0=d_1^{\mathbf{G}'}s_0$.

Consider an exact sequence
$$
\xymatrix{0\ar[r]&\mathcal{C}(P_1,H_2')\ar[r]^{\mathcal{C}(P_1,u_1)}&\mathcal{C}(P_1,G_1')\ar[r]^{\mathcal{C}(P_1,v_1)}&\mathcal{C}(P_1,H_1')\ar[r]&0.
}
$$
Let $r_1=\tau_1\gamma_1-\gamma_1'\tau_1'-s_0d_1^{\mathbf{P}}$. Then $\mathcal{C}(P_1,u_0)(\mathcal{C}(P_1,v_1)(r_1))=\mathcal{C}(P_1,d_1^{\mathbf{G}'})(r_1)=0$.
But $\mathcal{C}(P_1,u_0)$ is monic, we have $\mathcal{C}(P_1,v_1)(r_1)=0$. Thus there is $t_1\in\mathcal{C}(P_1,H_2')$ with $r_1=\mathcal{C}(P_1,u_1)(t_1)=u_1t_1$. By the exact sequence
$$
\xymatrix{0\ar[r]&\mathcal{C}(P_1,H_3')\ar[r]^{\mathcal{C}(P_1,u_2)}&\mathcal{C}(P_1,G_2')\ar[r]^{\mathcal{C}(P_1,v_2)}&\mathcal{C}(P_1,H_2')\ar[r]&0,
}
$$
there is $s_1\in\mathcal{C}(P_1,G_2')$ with $t_1=\mathcal{C}(P_1,v_2)(s_1)=v_2s_1$. Thus $r_1=u_1t_1=u_1v_2s_1=d_2^{\mathbf{G}'}s_1$, that is,
$\tau_1\gamma_1-\gamma_1'\tau_1'=d_2^{\mathbf{G}'}s_1+s_0d_1^{\mathbf{P}}$. Continuing this process, we obtain a homotopy $\{s_i\}$ such that $\tau\gamma\thicksim\gamma'\tau'$.
\end{proof}

\begin{rem}\label{fact:2.5'} {Let $M\in\widetilde{\mathcal{GP}}(\xi)$ and $N\in\widetilde{\mathcal{GI}}(\xi)$.  By the above lemma and its dual argument, one can see that ${\rm \xi xt}_{\mathcal{GP}(\xi)}^n(M,-)$ and ${\rm \xi xt}_{\mathcal{GI}(\xi)}^n(-,N)$ are  independent of the choice of $\xi$-$\mathcal{G}$projective resolutions of $M$ and $\xi$-$\mathcal{G}$injective coresolutions of $N$, respectively.
}
\end{rem}

Now we show the balance of $\xi$-Gorenstein cohomology.

\begin{thm}\label{thm:balance-of-gorenstein}
Assume that $M\in\widetilde{\mathcal{GP}}(\xi)$ and $N\in\widetilde{\mathcal{GI}}(\xi)$. Then
$$
{\rm \xi xt}_{\mathcal{GP}(\xi)}^n(M,N)\cong {\rm \xi xt}_{\mathcal{GI}(\xi)}^n(M,N)
$$
for any $n\geq 1$.
\end{thm}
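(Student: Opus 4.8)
The plan is to follow the classical Avramov–Martsinkovsky/Holm strategy for balancing a Gorenstein (co)homology theory, adapted to the extriangulated setting. The key point is that since $M\in\widetilde{\mathcal{GP}}(\xi)$ has a finite $\xi$-$\mathcal{G}$projective resolution $\mathbf{G}\to M$ (say of length $r$) and $N\in\widetilde{\mathcal{GI}}(\xi)$ has a finite $\xi$-$\mathcal{G}$injective coresolution $N\to \mathbf{E}$ (say of length $s$), we must compare $\mathrm{H}^n\mathcal{C}(\mathbf{G},N)$ with $\mathrm{H}^n\mathcal{C}(M,\mathbf{E})$. First I would assemble the relevant $\mathbb{E}$-triangles: the resolution $\mathbf{G}\to M$ is built from $\mathcal{C}(\mathcal{GP}(\xi),-)$-exact $\mathbb{E}$-triangles $K_{i+1}\to G_i\to K_i\dashrightarrow$ in $\xi$ with $K_0=M$, and dually $\mathbf{E}$ comes from $\mathcal{C}(-,\mathcal{GI}(\xi))$-exact $\mathbb{E}$-triangles. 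The crucial auxiliary fact, provable via dimension-shifting along these $\mathbb{E}$-triangles and the long exact sequences of $\xi$-Gorenstein cohomology (Propositions \ref{prop:long-exact1} and \ref{prop:long-exact2}), is a \emph{vanishing lemma}: ${\rm \xi xt}_{\mathcal{GP}(\xi)}^n(G,N)=0$ for $G\in\mathcal{GP}(\xi)$, $N\in\widetilde{\mathcal{GI}}(\xi)$, and $n\geq 1$, and dually ${\rm \xi xt}_{\mathcal{GI}(\xi)}^n(M,E)=0$ for $E\in\mathcal{GI}(\xi)$, $M\in\widetilde{\mathcal{GP}}(\xi)$, $n\geq 1$. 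This reduces, via the definition of $\xi$-$\mathcal{G}$(co)resolutions and the exactness of the defining $\mathbb{E}$-triangles, to the statement that $\mathcal{C}(G,-)$ applied to the $\xi$-$\mathcal{G}$injective coresolution of $N$ is exact in positive degrees, which in turn follows because each defining $\mathbb{E}$-triangle of $\mathbf{E}$ is $\mathcal{C}(-,\mathcal{GI}(\xi))$-exact and $G\in\mathcal{GP}(\xi)\subseteq{}$(objects killed by those functors); the analogous statement for the projective side is symmetric.

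Granting the vanishing lemma, the core of the argument is a double-complex / induction on $r+s$. I would form the first-quadrant-type double complex $\mathcal{C}(\mathbf{G},\mathbf{E})$ (using that both $\mathbf{G}$ and $\mathbf{E}$ are complexes of objects on which the relevant Hom-functors behave well) and compute its total cohomology two ways. Filtering by columns and using that $\mathcal{C}(G_i,-)$ takes the coresolution $N\to\mathbf{E}$ to an exact complex except in degree $0$ (where it computes $\mathcal{C}(G_i,N)$) collapses one spectral sequence to $\mathrm{H}^n\mathcal{C}(\mathbf{G},N)$; filtering by rows and using the dual statement collapses the other to $\mathrm{H}^n\mathcal{C}(M,\mathbf{E})$. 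Since both filtrations are bounded (the resolutions are finite), both spectral sequences converge to the cohomology of the total complex, giving the desired isomorphism for $n\geq 1$ (the restriction to $n\geq1$ is exactly because the edge maps at $n=0$ involve the degree-zero terms $M$ and $N$ themselves, where the comparison is only a monomorphism or need not hold). In the extriangulated language this ``double complex'' bookkeeping is best carried out by an explicit double induction, peeling off one $\mathbb{E}$-triangle $K_1\to G_0\to M\dashrightarrow$ at a time on the $M$-side and one $N\to E_0\to L_1\dashrightarrow$ at a time on the $N$-side, applying the long exact sequences in $\xi$-Gorenstein cohomology (from Propositions \ref{prop:long-exact1} and \ref{prop:long-exact2}) together with the vanishing lemma to shift degrees.

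Concretely, the induction runs as follows. If $\xi$-$\mathcal{G}\mathrm{pd}\,M=0$, i.e. $M\in\mathcal{GP}(\xi)$, then ${\rm \xi xt}_{\mathcal{GP}(\xi)}^n(M,N)=0$ for $n\geq1$ (the resolution is $M$ itself), and by the dual vanishing lemma ${\rm \xi xt}_{\mathcal{GI}(\xi)}^n(M,N)=0$ as well, so both sides vanish and agree. Likewise if $\xi$-$\mathcal{G}\mathrm{id}\,N=0$. For the inductive step, choose an $\mathbb{E}$-triangle $K\to G_0\to M\dashrightarrow$ in $\xi$ with $G_0\in\mathcal{GP}(\xi)$ and $\xi$-$\mathcal{G}\mathrm{pd}\,K=\xi$-$\mathcal{G}\mathrm{pd}\,M-1$ (available by the discussion preceding Definition 3.1, citing \cite[Proposition 5.5]{HZZ}). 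Both theories send this $\mathbb{E}$-triangle to a long exact sequence: on the $\mathcal{GP}$-side this is immediate from the definition since the $\mathbb{E}$-triangle is $\mathcal{C}(\mathcal{GP}(\xi),-)$-exact and may be used as the first step of a $\xi$-$\mathcal{G}$projective resolution of $M$; on the $\mathcal{GI}$-side one invokes Proposition \ref{prop:long-exact2} (the $\xi$-$\mathcal{G}$injective cohomology long exact sequence in the first variable). Comparing the two long exact sequences via the natural connecting/comparison maps and using the vanishing ${\rm \xi xt}_{\mathcal{GI}(\xi)}^n(G_0,N)=0$ for $n\geq1$, one gets ${\rm \xi xt}_{\mathcal{GP}(\xi)}^{n}(M,N)\cong{\rm \xi xt}_{\mathcal{GP}(\xi)}^{n-1}(K,N)$ and similarly ${\rm \xi xt}_{\mathcal{GI}(\xi)}^{n}(M,N)\cong{\rm \xi xt}_{\mathcal{GI}(\xi)}^{n-1}(K,N)$ for $n\geq2$, with the $n=1$ case handled by a short diagram chase at the start of the long exact sequences; the induction hypothesis applied to $K$ (which has strictly smaller $\xi$-$\mathcal{G}$projective dimension and still $N\in\widetilde{\mathcal{GI}}(\xi)$) then finishes the step.

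I expect the main obstacle to be the vanishing lemma, specifically verifying that $\mathcal{C}(G,-)$ for $G\in\mathcal{GP}(\xi)$ genuinely takes a $\xi$-$\mathcal{G}$injective coresolution to an exact complex in positive degrees. This requires knowing that $\xi$-$\mathcal{G}$injective objects are ``$\mathcal{GP}(\xi)$-acyclic'' in the appropriate sense, which is not one of the quoted definitions but should follow from a dimension-shifting argument combining the complete $\xi$-injective coresolution defining a $\xi$-$\mathcal{G}$injective object with the fact that $\xi$-projective objects lie in $\mathcal{GP}(\xi)$ and that $\mathcal{C}(\mathcal{P}(\xi),-)$ is exact on $\xi$-triangles — together with Condition (WIC) to split off summands when passing between syzygies and cosyzygies. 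The bookkeeping of the two long exact sequences and the careful tracking of the degree $n=1$ boundary case, where the maps out of $\mathcal{C}(M,-)$ and into $\mathcal{C}(-,N)$ are only injective, is the other delicate point; everything else is formal dimension shifting once the vanishing lemma and Propositions \ref{prop:long-exact1}, \ref{prop:long-exact2} are in hand.
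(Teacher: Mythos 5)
Your overall architecture coincides with the paper's: produce a $\xi$-$\mathcal{G}$projective resolution $\mathbf{G}\to M$ that is $\mathcal{C}(-,\mathcal{GI}(\xi))$-exact and a $\xi$-$\mathcal{G}$injective coresolution $N\to\mathbf{E}$ that is $\mathcal{C}(\mathcal{GP}(\xi),-)$-exact, and then balance via the double complex $\mathcal{C}(\mathbf{G},\mathbf{E})$ (the paper invokes \cite[Proposition 1.4.16]{EJ2} where you run an explicit double induction; that difference is cosmetic). The genuine gap is in your justification of the ``vanishing lemma'', which is precisely this acyclicity statement and is where all of the work lies. The reason you give --- that each defining $\mathbb{E}$-triangle of $\mathbf{E}$ is $\mathcal{C}(-,\mathcal{GI}(\xi))$-exact and $G\in\mathcal{GP}(\xi)$ is ``killed by those functors'' --- is a non sequitur: $\mathcal{C}(-,\mathcal{GI}(\xi))$-exactness controls morphisms \emph{into} $\xi$-$\mathcal{G}$injective objects and says nothing about the functor $\mathcal{C}(G,-)$ for $G\in\mathcal{GP}(\xi)$. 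The fallback you sketch (dimension shifting using only the complete resolutions defining Gorenstein objects, without using the finiteness hypotheses) cannot succeed either: shifting a $\xi$-$\mathcal{G}$projective $G$ along its complete $\xi$-projective resolution only identifies $\xi{\rm xt}^1_\xi(G,L)$ with ever higher $\xi{\rm xt}$ groups of other $\xi$-$\mathcal{G}$projectives into $L$, which need not vanish for arbitrary $L$; the required vanishing is simply false without a finiteness assumption on $L$.

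The missing ingredient is the one the paper leads with: since $M\in\widetilde{\mathcal{GP}}(\xi)$, \cite[Proposition 5.5]{HZZ} yields an $\mathbb{E}$-triangle $K_1\to G_0\to M\dashrightarrow$ in $\xi$ with $G_0\in\mathcal{GP}(\xi)$ and $\xi\mbox{-}{\rm pd}K_1<\infty$ --- finite $\xi$-\emph{projective} dimension, not merely finite $\xi$-$\mathcal{G}$projective dimension. For such $K_1$ one has $\xi{\rm xt}^{\geq 1}_\xi(K_1,H)=0$ for every $H\in\mathcal{GI}(\xi)$ (embed $\xi{\rm xt}^{i}_\xi(K_1,H)$ into $\xi{\rm xt}^{i+k}_\xi(K_1,H_k)$ along the left half of the complete $\xi$-injective coresolution of $H$ and let $i+k$ exceed $\xi\mbox{-}{\rm pd}K_1$), and a short diagram chase then shows this particular $\mathbb{E}$-triangle is $\mathcal{C}(-,\mathcal{GI}(\xi))$-exact; iterating produces the doubly acyclic resolution, and dually for $N$. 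This is exactly where the hypotheses $M\in\widetilde{\mathcal{GP}}(\xi)$ and $N\in\widetilde{\mathcal{GI}}(\xi)$ are consumed, one for each resolution. Note also that the base case of your induction ($M\in\mathcal{GP}(\xi)$ forces ${\rm \xi xt}^{n}_{\mathcal{GI}(\xi)}(M,N)=0$) \emph{is} the vanishing lemma, so nothing in your argument is independent of it. Once this lemma is correctly established, the rest of your proposal goes through.
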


\begin{proof}
Since $M\in\widetilde{\mathcal{GP}}(\xi)$, by \cite[Proposition 5.5]{HZZ}, there is an $\mathbb{E}$-triangle $\xymatrix@C=0.5cm{K_1\ar[r]^{g_1}&G_0\ar[r]^{f_0}&M\ar@{-->}[r]&}$ in $\xi$ with $G_0\in\mathcal{GP}(\xi)$ and $\xi$-pd$K_1<\infty$. For any $\xi$-$\mathcal{G}$injective object $H$, by definition
there is an $\mathbb{E}$-triangle $\xymatrix@C=0.5cm{H_1\ar[r]^s&E_0\ar[r]^t&H\ar@{-->}[r]&}$ in $\xi$ with $H_1\in\mathcal{GI}(\xi)$ and $E_0\in\mathcal{I}(\xi)$. Consider the following commutative diagram
$$
\xymatrix{&&0\ar[d]&0\ar[d]&\\
&&\mathcal{C}(M,E_0)\ar[r]\ar[d]&\mathcal{C}(M,H)\ar[d]&\\
&&\mathcal{C}(G_0,E_0)\ar[r]^{\mathcal{C}(G_0,t)}\ar[d]^{\mathcal{C}(g_1,E_0)}&\mathcal{C}(G_0,H)\ar[d]^{\mathcal{C}(g_1,H)}&\\
0\ar[r]& \mathcal{C}(K_1,H_1)\ar[r]& \mathcal{C}(K_1,E_0)\ar[r]^{\mathcal{C}(K_1,t)}\ar[d] & \mathcal{C}(K_1,H)\ar[r]\ar[d]& 0\\
&&0&\ 0.&}
$$
Since $\xi$-pd$K_1<\infty$ and $E_0\in\mathcal{I}(\xi)$, we have that the bottom row and the first column are exact. It follows that the second column is exact, and
hence $\xymatrix@C=0.5cm{K_1\ar[r]^{g_1}&G_0\ar[r]^{f_0}&M\ar@{-->}[r]&}$ is $\mathcal{C}(-,\mathcal{GI}(\xi))$-exact. Inductively, we get a $\xi$-$\mathcal{G}$projective resolution $\mathbf{G}\to M$ which is $\mathcal{C}(-,\mathcal{GI}(\xi))$-exact.

Dually, we can get a $\xi$-$\mathcal{G}$injective resolution $N\to \mathbf{E}$ which is $\mathcal{C}(\mathcal{GP}(\xi),-)$-exact. Following these, we have a commutative diagram as follows
$$
\xymatrix@C=20pt@R=20pt{&0\ar[d]&0\ar[d]&0\ar[d]&\\
0\ar[r]&\mathcal{C}(M,N)\ar[r]\ar[d]&\mathcal{C}(M,E^0)\ar[r]\ar[d]&\mathcal{C}(M,E^1)\ar[r]\ar[d]&\cdots\\
0\ar[r]&\mathcal{C}(G_0,N)\ar[r]\ar[d]&\mathcal{C}(G_0,E^0)\ar[r]\ar[d]&\mathcal{C}(G_0,E^1)\ar[r]\ar[d]&\cdots\\
0\ar[r]&\mathcal{C}(G_1,N)\ar[r]\ar[d]&\mathcal{C}(G_1,E^0)\ar[r]\ar[d]&\mathcal{C}(G_1,E^1)\ar[r]\ar[d]&\cdots\\
&\vdots&\vdots&\vdots&
}
$$
where all rows and columns are exact except the top row and the left column. By \cite[Proposition 1.4.16]{EJ2}, we have
$$
{\rm \xi xt}_{\mathcal{GP}(\xi)}^n(M,N)={\rm H}^n\mathcal{C}(\mathbf{G},N)\cong {\rm H}^n\mathcal{C}(M,\mathbf{E})= {\rm \xi xt}_{\mathcal{GI}(\xi)}^n(M,N),
$$
as desired.
\end{proof}

Next we compare ${\rm \xi xt}_{\mathcal{GP}(\xi)}^n(M,N)$ and ${\rm \xi xt}_{\mathcal{GI}(\xi)}^n(M,N)$ with ${\rm \xi xt}_{\xi}^n(M,N)$.

\begin{prop}
Let $M,N\in\mathcal{C}$.
\begin{itemize}
  \item [(1)] If $\xi$-${\rm pd}M<\infty$, then ${\rm \xi xt}_{\mathcal{GP}(\xi)}^n(M,N)\cong {\rm \xi xt}_{\xi}^n(M,N)$ for any $n\geq 0$.
  \item [(2)] If $\xi$-${\rm id}N<\infty$, then ${\rm \xi xt}_{\mathcal{GI}(\xi)}^n(M,N)\cong {\rm \xi xt}_{\xi}^n(M,N)$ for any $n\geq 0$.
\end{itemize}
\end{prop}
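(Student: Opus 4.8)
The plan is to reduce both isomorphisms to the classical observation that, under the relevant finiteness hypothesis, a finite $\xi$-projective resolution (resp.\ $\xi$-injective coresolution) is already a $\xi$-$\mathcal{G}$projective resolution (resp.\ $\xi$-$\mathcal{G}$injective coresolution), so that the two cohomology theories are literally computed from the same complex. I treat (1); part (2) is formally dual. Write $d=\xi\text{-}{\rm pd}\,M<\infty$ and fix a $\xi$-projective resolution of $M$ of length $d$,
$$0\to P_d\to\cdots\to P_0\to M\to 0,$$
assembled from $\mathbb{E}$-triangles $K_{i+1}\to P_i\xrightarrow{f_i}K_i$ in $\xi$ with $K_0=M$ and $K_d=P_d\in\mathcal{P}(\xi)$; thus $\xi\text{-}{\rm pd}\,K_{i+1}\leq d-i-1<\infty$ for every $i$, and each $P_i\in\mathcal{GP}(\xi)$ since $\mathcal{P}(\xi)\subseteq\mathcal{GP}(\xi)$.

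The first step is an $\mathrm{Ext}$-vanishing statement: for every $G\in\mathcal{GP}(\xi)$ and every object $Y$ with $\xi\text{-}{\rm pd}\,Y<\infty$ one has $\xi{\rm xt}^{n}_{\xi}(G,Y)=0$ for all $n\geq1$. The case $Y\in\mathcal{P}(\xi)$ is immediate: a complete $\xi$-projective resolution witnessing $G\in\mathcal{GP}(\xi)$ is $\mathcal{C}(-,\mathcal{P}(\xi))$-exact, so applying $\mathcal{C}(-,Y)$ to its left half yields an exact complex computing $\xi{\rm xt}^{n}_{\xi}(G,Y)$, whence the vanishing in positive degrees; the general case then follows by dimension shifting in the second variable along a finite $\xi$-projective resolution of $Y$, using the long exact sequence of $\xi$-cohomology. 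The second step uses this to show that each $\mathbb{E}$-triangle $K_{i+1}\to P_i\xrightarrow{f_i}K_i$ is $\mathcal{C}(\mathcal{GP}(\xi),-)$-exact, i.e.\ that $f_i$ is a $\xi$-$\mathcal{G}$projective precover of $K_i$: for $G\in\mathcal{GP}(\xi)$ one applies $\mathcal{C}(G,-)$, exactness in the middle being automatic from the $\mathbb{E}$-triangle, while surjectivity onto $\mathcal{C}(G,K_i)$ and the injectivity on the left follow because the relevant obstructions lie in $\xi{\rm xt}^{1}_{\xi}(G,K_{i+1})$, which vanishes by Step~1 since $\xi\text{-}{\rm pd}\,K_{i+1}<\infty$; this is argued as in the module case, invoking closure of $\xi$ under base and cobase change.

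Consequently $\mathbf{P}\to M$ is a $\xi$-$\mathcal{G}$projective resolution of $M$; in particular $M\in\widetilde{\mathcal{GP}}(\xi)$. Since $\xi{\rm xt}^{n}_{\mathcal{GP}(\xi)}(M,-)$ does not depend on the choice of $\xi$-$\mathcal{G}$projective resolution (by the remark following the Lemma), we may compute it from $\mathbf{P}$, obtaining
$$\xi{\rm xt}^{n}_{\mathcal{GP}(\xi)}(M,N)={\rm H}^{n}\mathcal{C}(\mathbf{P},N)=\xi{\rm xt}^{n}_{\mathcal{P}(\xi)}(M,N)=\xi{\rm xt}^{n}_{\xi}(M,N)$$
for all $n\geq0$, where the last two equalities are the definition of $\xi{\rm xt}^{n}_{\mathcal{P}(\xi)}$ and the balance of $\xi$-cohomology. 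For (2) one argues dually: replace Step~1 by the vanishing $\xi{\rm xt}^{n}_{\xi}(Z,H)=0$ for $H\in\mathcal{GI}(\xi)$ and $\xi\text{-}{\rm id}\,Z<\infty$, show that a finite $\xi$-injective coresolution of $N$ is a $\xi$-$\mathcal{G}$injective coresolution, and compute $\xi{\rm xt}^{n}_{\mathcal{GI}(\xi)}(M,N)$ from it.

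I expect Step~2 to be the main obstacle. In an abelian category the corresponding exactness is trivial because syzygies embed into projectives; in an extriangulated category this left-hand injectivity — which is part of the definition of a $\mathcal{C}(\mathcal{GP}(\xi),-)$-exact $\mathbb{E}$-triangle, hence of a $\xi$-$\mathcal{G}$projective precover — is not automatic and must be extracted carefully from the $\mathrm{Ext}$-vanishing of Step~1 together with the axioms of a proper class $\xi$.
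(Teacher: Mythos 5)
Your overall route is the same as the paper's: both arguments show that a finite $\xi$-projective resolution of $M$ is already a $\xi$-$\mathcal{G}$projective resolution, so that $\xi{\rm xt}^{n}_{\mathcal{GP}(\xi)}(M,N)$ and $\xi{\rm xt}^{n}_{\xi}(M,N)$ are computed from the same complex. The difference is that the paper disposes of the only nontrivial point --- that each $\mathbb{E}$-triangle $K_{i+1}\to P_i\to K_i$ is $\mathcal{C}(\mathcal{GP}(\xi),-)$-exact --- in one line by citing \cite[Lemma 3.5]{HZZ1}, whereas your Steps 1 and 2 are an attempt to reprove that lemma. Your Step 1 (the vanishing $\xi{\rm xt}^{n}_{\xi}(G,Y)=0$ for $G\in\mathcal{GP}(\xi)$, $\xi$-${\rm pd}\,Y<\infty$, $n\geq 1$, obtained from the $\mathcal{C}(-,\mathcal{P}(\xi))$-exactness of a complete resolution plus dimension shifting in the second variable) is correct, and so is your treatment of the surjectivity of $\mathcal{C}(G,P_i)\to\mathcal{C}(G,K_i)$: the obstruction to lifting $f\colon G\to K_i$ is the base change $f^{*}\delta_i$, which is realized in $\xi$ and hence lies in $\xi{\rm xt}^{1}_{\xi}(G,K_{i+1})=0$.

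The point you flag but do not close --- the injectivity of $\mathcal{C}(G,K_{i+1})\to\mathcal{C}(G,P_i)$ --- is, as written, a genuine gap, and it is not of the same nature as the surjectivity. For an $\mathbb{E}$-triangle $A\xrightarrow{x}B\xrightarrow{y}C$ the Yoneda sequences of \cite{NP} only give exactness of $\mathcal{C}(G,A)\to\mathcal{C}(G,B)\to\mathcal{C}(G,C)\to\mathbb{E}(G,A)$ starting at the middle term; in the absence of a shift functor there is no object whose Hom-group maps onto $\ker(x_{*})$, so this kernel is not ``an obstruction lying in $\xi{\rm xt}^{1}_{\xi}(G,K_{i+1})$'' and your Step 1 does not touch it. What actually kills it is the combination of two facts from \cite{HZZ1}: the long exact sequence of $\xi{\rm xt}_{\xi}(G,-)$ in the second variable, which begins with $0\to\xi{\rm xt}^{0}_{\xi}(G,A)\to\cdots$ because applying $\mathcal{C}(P_j,-)$ to a $\xi$-triangle yields a short exact sequence by the very definition of $\xi$-projectivity, together with the natural identification $\xi{\rm xt}^{0}_{\xi}(G,-)\cong\mathcal{C}(G,-)$; this is exactly what \cite[Lemma 3.5]{HZZ1} packages. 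So either cite that lemma (as the paper does), or supply the identification of $\xi{\rm xt}^{0}$ explicitly; the phrase ``extracted carefully from the axioms of a proper class'' is a placeholder for the one step that carries real content here. Everything else, including the dual argument for (2), matches the paper.
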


\begin{proof}
(1) Assume that $\xi$-${\rm pd}M=m<\infty$. Then there is a $\xi$-projective resolution
$$
0\to P_m\to \cdots \to P_1\to P_0\to M\to 0.
$$
For the relevant $\mathbb{E}$-triangle $\xymatrix@C=0.5cm{K_{i+1}\ar[r]&P_i\ar[r]&K_i\ar@{-->}[r]&}$, since all terms have finite $\xi$-projective dimension, it is $\mathcal{C}(\mathcal{GP}(\xi),-)$-exact by \cite[Lemma 3.5]{HZZ1}. This shows that the $\xi$-projective resolution above is a $\xi$-$\mathcal{G}$projective resolution. Thus ${\rm \xi xt}_{\mathcal{GP}(\xi)}^n(M,N)\cong {\rm \xi xt}_{\xi}^n(M,N)$.

(2) is dual.
\end{proof}

\begin{prop}\label{prop:long-exact1}
Let $M\in\widetilde{\mathcal{GP}}(\xi)$ and $\mathbf{N}:\ \xymatrix@C=0.5cm{N\ar[r]^x&N'\ar[r]^y&N''\ar@{-->}[r]&}$ a $\mathcal{C}(\mathcal{GP}(\xi),-)$-exact $\mathbb{E}$-triangle in $\xi$.
\begin{itemize}
  \item [(1)] There are the connecting maps $\varepsilon^i_{\mathcal{GP}}(M,\mathbf{N}):\xi{\rm xt}^{i}_{\mathcal{GP}(\xi)}(M,N'')\longrightarrow \xi{\rm xt}^{i+1}_{\mathcal{GP}(\xi)}(M,N)$ which are natural in $M$ and $\mathbf{N}$, such that the following sequence
\begin{multline*}
  0\longrightarrow \xi{\rm xt}^0_{\mathcal{GP}(\xi)}(M,N)\longrightarrow\xi{\rm xt}^0_{\mathcal{GP}(\xi)}(M,N')\longrightarrow\xi{\rm xt}^0_{\mathcal{GP}(\xi)}(M,N'')\longrightarrow \xi{\rm xt}^1_{\mathcal{GP}(\xi)}(M,N)\longrightarrow\\
  \cdots\longrightarrow \xi{\rm xt}^{n-1}_{\mathcal{GP}(\xi)}(M,N'')\longrightarrow \xi{\rm xt}^{n}_{\mathcal{GP}(\xi)}(M,N)\longrightarrow\xi{\rm xt}^n_{\mathcal{GP}(\xi)}(M,N')\longrightarrow\xi{\rm xt}^n_{\mathcal{GP}(\xi)}(M,N'')\longrightarrow \cdots
\end{multline*}
is exact
  \item [(2)] There are maps $\delta^{i}(M,N''): \xi{\rm xt}^{i}_{\mathcal{GP}(\xi)}(M,N'')\rightarrow \xi{\rm xt}^{i}_{\xi}(M,N'')$ and \\ $\delta^{i}(M,N): \xi{\rm xt}^{i}_{\mathcal{GP}(\xi)}(M,N)\rightarrow \xi{\rm xt}^{i}_{\xi}(M,N)$ such that the following diagram
$$
\xymatrix{\xi{\rm xt}^{i}_{\mathcal{GP}(\xi)}(M,N'')\ar[r]\ar[d]^{\delta^{i}(M,N'')}&\xi{\rm xt}^{i+1}_{\mathcal{GP}(\xi)}(M,N)\ar[d]^{\delta^{i}(M,N)}\\
\xi{\rm xt}^{i}_{\xi}(M,N'')\ar[r]&\xi{\rm xt}^{i+1}_{\xi}(M,N)}
$$
is commutative for each $i\geq 0$.
\end{itemize}
\end{prop}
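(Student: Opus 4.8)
\medskip
The plan is to deduce both parts from the long exact cohomology sequence attached to a short exact sequence of cochain complexes, letting the comparison morphisms of the preceding Lemma do the bookkeeping. Since $M\in\widetilde{\mathcal{GP}}(\xi)$, the discussion above (built on \cite[Proposition 5.5]{HZZ}) furnishes a $\xi$-$\mathcal{G}$projective resolution $\vartheta\colon\mathbf{G}\to M$, say $\cdots\to G_{1}\to G_{0}\to M\to 0$. Applying the covariant functors $\mathcal{C}(G_{i},-)$ to $\mathbf{N}$: because each $G_{i}\in\mathcal{GP}(\xi)$ and $\mathbf{N}$ is $\mathcal{C}(\mathcal{GP}(\xi),-)$-exact, the sequence $0\to\mathcal{C}(G_{i},N)\to\mathcal{C}(G_{i},N')\to\mathcal{C}(G_{i},N'')\to 0$ is exact for every $i\geq0$, and these assemble into a short exact sequence of cochain complexes
$$0\to\mathcal{C}(\mathbf{G},N)\to\mathcal{C}(\mathbf{G},N')\to\mathcal{C}(\mathbf{G},N'')\to 0,$$
all concentrated in non-negative degrees.

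We would take $\varepsilon^{i}_{\mathcal{GP}}(M,\mathbf{N})$ to be the $i$-th connecting homomorphism of this short exact sequence; the associated long exact cohomology sequence is then precisely the one in (1), and it begins with $0$ because the complexes vanish in negative degrees, while by Remark \ref{fact:2.5'} the groups occurring are independent of the chosen resolution. For naturality in $M$, a morphism $M\to\widetilde{M}$ in $\widetilde{\mathcal{GP}}(\xi)$ lifts, by part (2) of the Lemma, to a morphism $\mathbf{G}\to\widetilde{\mathbf{G}}$ of $\xi$-$\mathcal{G}$projective resolutions, unique up to homotopy; applying $\mathcal{C}(-,\mathbf{N})$ yields a morphism between the corresponding short exact sequences of complexes, whence compatibility of the connecting maps (the homotopy-uniqueness being what makes the induced map on cohomology well defined). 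Naturality in $\mathbf{N}$ is immediate: a morphism of $\mathcal{C}(\mathcal{GP}(\xi),-)$-exact $\mathbb{E}$-triangles in $\xi$ induces, through $\mathcal{C}(\mathbf{G},-)$, a morphism between the two short exact sequences of complexes.

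For (2), fix a $\xi$-projective resolution $\pi\colon\mathbf{P}\to M$, so that $\xi{\rm xt}^{i}_{\xi}(M,-)=H^{i}\mathcal{C}(\mathbf{P},-)$. By part (1) of the Lemma there is a morphism $\gamma\colon\mathbf{P}\to\mathbf{G}$ with $\vartheta\gamma=\pi$, unique up to homotopy; we define $\delta^{i}(M,-)$ to be the map induced on cohomology by $\mathcal{C}(\gamma,-)\colon\mathcal{C}(\mathbf{G},-)\to\mathcal{C}(\mathbf{P},-)$, which is well defined by the homotopy-uniqueness of $\gamma$. Since $\mathbf{N}\in\xi$ and each $P_{i}$ is $\xi$-projective, $\mathcal{C}(P_{i},-)$ also carries $\mathbf{N}$ to a short exact sequence, and the long exact sequence computing $\xi{\rm xt}^{\bullet}_{\xi}(M,-)$ is exactly the one attached to $0\to\mathcal{C}(\mathbf{P},N)\to\mathcal{C}(\mathbf{P},N')\to\mathcal{C}(\mathbf{P},N'')\to0$. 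Now $\gamma$ produces a morphism of short exact sequences of complexes
$$\xymatrix{0\ar[r]&\mathcal{C}(\mathbf{G},N)\ar[r]\ar[d]^{\mathcal{C}(\gamma,N)}&\mathcal{C}(\mathbf{G},N')\ar[r]\ar[d]^{\mathcal{C}(\gamma,N')}&\mathcal{C}(\mathbf{G},N'')\ar[r]\ar[d]^{\mathcal{C}(\gamma,N'')}&0\\
0\ar[r]&\mathcal{C}(\mathbf{P},N)\ar[r]&\mathcal{C}(\mathbf{P},N')\ar[r]&\mathcal{C}(\mathbf{P},N'')\ar[r]&0,}$$
and the naturality of connecting homomorphisms in cohomology, applied to this diagram, gives precisely the commutative square in (2).

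The only point that demands genuine attention is verifying that the relevant sequences of abelian groups obtained by applying $\mathcal{C}(G_{i},-)$ and $\mathcal{C}(P_{i},-)$ to $\mathbf{N}$ are short exact; this is exactly where the hypotheses ``$\mathbf{N}$ is $\mathcal{C}(\mathcal{GP}(\xi),-)$-exact'' and ``$\mathbf{N}\in\xi$'' enter. Everything else is routine homological bookkeeping, and the delicate matters of well-definedness and functoriality of $\varepsilon^{i}_{\mathcal{GP}}$ and $\delta^{i}$ are already taken care of by the homotopy-uniqueness supplied by the preceding Lemma; I do not anticipate any essential obstacle beyond organizing these naturality diagrams.
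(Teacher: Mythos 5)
Your proposal is correct and follows essentially the same route as the paper: form the short exact sequence of complexes $0\to\mathcal{C}(\mathbf{G},N)\to\mathcal{C}(\mathbf{G},N')\to\mathcal{C}(\mathbf{G},N'')\to 0$ (using the $\mathcal{C}(\mathcal{GP}(\xi),-)$-exactness of $\mathbf{N}$), take the long exact cohomology sequence for (1), and use the comparison morphism $\gamma\colon\mathbf{P}\to\mathbf{G}$ to get a morphism of short exact sequences of complexes whose induced square of connecting maps gives (2). Your write-up simply spells out the naturality and well-definedness details that the paper leaves implicit.
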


\begin{proof}
Let $\pi:\mathbf{P}\to M$ and $\xi:\mathbf{G}\to M$ be $\xi$-projective and $\xi$-$\mathcal{G}$projective resolutions, respectively. Then there is a morphism
$\gamma: \mathbf{P}\to \mathbf{G}$ which induced a commutative diagram
$$
\xymatrix{0\ar[r]&\mathcal{C}(\mathbf{G},N)\ar[r]\ar[d]^{\mathcal{C}(\gamma,N)}&\mathcal{C}(\mathbf{G},N')\ar[r]\ar[d]^{\mathcal{C}(\gamma,N')}&
\mathcal{C}(\mathbf{G},N'')\ar[r]\ar[d]^{\mathcal{C}(\gamma,N'')}&0\\
0\ar[r]&\mathcal{C}(\mathbf{P},N)\ar[r]&\mathcal{C}(\mathbf{P},N')\ar[r]&\mathcal{C}(\mathbf{P},N'')\ar[r]&0.}
$$
Here the two rows are short exact sequences of complexes. By taking the homology group, we get the desired long exact sequence and the commutative diagram.
\end{proof}

Using standard arguments
from relative homological algebra, one can prove the following version of the Horseshoe Lemma for $\xi$-$\mathcal{G}$projective resolutions.

\begin{lem} {\rm (Horseshoe Lemma for $\xi$-$\mathcal{G}$projective resolutions)}
Let $\xymatrix{M\ar[r]^x& M'\ar[r]^y& M'' \ar@{-->}[r]^{\delta}& }$ be a $\mathcal{C}(\mathcal{GP}(\xi),-)$-exact $\mathbb{E}$-triangle in $\xi$ such that $\xi\mbox{-}\mathcal{G}{\rm pd}M<\infty$ and $\xi\mbox{-}\mathcal{G}{\rm pd}M''<\infty$. Let $\pi:\mathbf{P}\to M$ and $\pi'':\mathbf{P}''\to M''$ be $\xi$-projective resolutions of $M$ and $M''$, respectively.
Let $\vartheta:\mathbf{G}\to M$ and $\vartheta'':\mathbf{G}''\to M''$ be $\xi$-$\mathcal{G}$projective resolutions of $M$ and $M''$, respectively.  Then there is a commutative diagram:
\begin{equation}\label{com}
\begin{split}
\xymatrix{\mathbf{P}\ar[r]\ar[d]^{\gamma}&\mathbf{P}'\ar[r]\ar[d]^{\gamma'}&\mathbf{P}''\ar[d]^{\gamma''}\\
\mathbf{G}\ar[r]\ar[d]^{\vartheta}&\mathbf{G}'\ar[r]\ar[d]^{\vartheta'}&\mathbf{G}''\ar[d]^{\vartheta''}\\
M\ar[r]^x& M'\ar[r]^y& M''}
\end{split}
\end{equation}
such that $\pi=\vartheta\gamma$, $\pi''=\vartheta''\gamma''$, $\vartheta':\mathbf{G}'\to M'$ is a $\xi$-$\mathcal{G}$projective resolution of $M'$ and
 $\pi'=\vartheta'\gamma':\mathbf{P}'\to M'$ is a $\xi$-projective resolution of $M'$. Moreover,  the two upper rows are split $\mathbb{E}$-triangle in $\xi$.
\end{lem}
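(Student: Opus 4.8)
The plan is to build the middle resolutions degreewise as direct sums, $\mathbf{P}'_n=\mathbf{P}_n\oplus\mathbf{P}''_n$ and $\mathbf{G}'_n=\mathbf{G}_n\oplus\mathbf{G}''_n$, by a Horseshoe-type induction on $n$, and then to produce the comparison map $\gamma'$ in block-triangular form over the $\gamma$ and $\gamma''$ already at hand. Since $\mathcal{P}(\xi)$ and $\mathcal{GP}(\xi)$ are closed under finite direct sums, the middle terms are again $\xi$-projective, resp.\ $\xi$-$\mathcal{G}$projective; the horizontal maps $\mathbf{P}_n\to\mathbf{P}'_n\to\mathbf{P}''_n$ and $\mathbf{G}_n\to\mathbf{G}'_n\to\mathbf{G}''_n$ will be the obvious split $\mathbb{E}$-triangles, which lie in $\xi$ since $\Delta_0\subseteq\xi$; and the assumed finiteness of $\xi\mbox{-}\mathcal{G}{\rm pd}M$ and $\xi\mbox{-}\mathcal{G}{\rm pd}M''$ makes the induction stop, so that $\vartheta'\colon\mathbf{G}'\to M'$ has finite length and, in particular, $\xi\mbox{-}\mathcal{G}{\rm pd}M'<\infty$.

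For the $\xi$-$\mathcal{G}$projective part, the base step is the crucial one. Let $K_1\xrightarrow{g_1}G_0\xrightarrow{\vartheta_0}M$ and $K_1''\to G_0''\xrightarrow{\vartheta_0''}M''$ be the first $\mathbb{E}$-triangles of $\vartheta$ and $\vartheta''$ (in $\xi$, with extensions $\delta_0,\delta_0''$, and respectively $\mathcal{C}(\mathcal{GP}(\xi),-)$-exact). Because the $\mathbb{E}$-triangle $M\xrightarrow{x}M'\xrightarrow{y}M''$ (with $\mathbb{E}$-extension $\delta$) is $\mathcal{C}(\mathcal{GP}(\xi),-)$-exact and $G_0''\in\mathcal{GP}(\xi)$, the map $\mathcal{C}(G_0'',y)$ is surjective, so $\vartheta_0''=y\rho$ for some $\rho\colon G_0''\to M'$; set $\vartheta_0'=\bigl(\begin{smallmatrix}x\vartheta_0&\rho\end{smallmatrix}\bigr)\colon G_0\oplus G_0''\to M'$. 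To realise $\vartheta_0'$ as a deflation lying in an $\mathbb{E}$-triangle of $\xi$, I would factor it as $\vartheta_0'=e_2\beta$, where $\beta=\bigl(\begin{smallmatrix}\vartheta_0&0\\0&1\end{smallmatrix}\bigr)$ is the deflation of the $\mathbb{E}$-triangle $K_1\to G_0\oplus G_0''\xrightarrow{\beta}M\oplus G_0''$ obtained as the coproduct of $K_1\to G_0\to M$ with the split $\mathbb{E}$-triangle $0\to G_0''=G_0''$ (hence in $\xi$), and $e_2\colon M\oplus G_0''\to M'$ is produced by Lemma~\ref{lem1} applied to the $\mathbb{E}$-triangles $M\xrightarrow{x}M'\xrightarrow{y}M''$ and $K_1''\to G_0''\xrightarrow{\vartheta_0''}M''$: its middle object $W$ realises $(\vartheta_0'')^{*}\delta=\rho^{*}(y^{*}\delta)=0$ (using $y^{*}\delta=0$), so $W\cong M\oplus G_0''$; under this identification the accompanying $\mathbb{E}$-triangle $K_1''\to M\oplus G_0''\xrightarrow{e_2}M'$ realises $y^{*}\delta''_0$ and hence lies in $\xi$ by closure under base change, while $e_2$ has the form $\bigl(\begin{smallmatrix}x&\rho'\end{smallmatrix}\bigr)$ for a lift $\rho'$ of $\vartheta_0''$, which we may take to be $\rho$. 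Thus $\vartheta_0'$ is a composite of two $\xi$-deflations, and applying the axiom $(\mathrm{ET4})^{\mathrm{op}}$ of \cite{NP} to $\beta$ and $e_2$ yields an $\mathbb{E}$-triangle $K_1'\to G_0\oplus G_0''\xrightarrow{\vartheta_0'}M'$ together with an $\mathbb{E}$-triangle $K_1\to K_1'\to K_1''$ in a commutative $3\times3$ diagram; the latter is a base change of the $\mathbb{E}$-triangle of $\beta$ and so lies in $\xi$, and the former lies in $\xi$ because $\xi$ is saturated and closed under base change (that is, one checks here that the composite of two $\xi$-deflations is again a $\xi$-deflation, cf.\ \cite{HZZ}).

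Next I would check the two exactness statements needed to iterate. That $\vartheta_0'$ is a $\xi$-$\mathcal{G}$projective precover means $\mathcal{C}(W,-)$ is exact on $K_1'\to G_0\oplus G_0''\xrightarrow{\vartheta_0'}M'$ for every $W\in\mathcal{GP}(\xi)$: given $\psi\colon W\to M'$, lift $y\psi$ through $\vartheta_0''$ to $\psi''\colon W\to G_0''$ (possible since $K_1''\to G_0''\to M''$ is $\mathcal{C}(\mathcal{GP}(\xi),-)$-exact), observe $y(\psi-\rho\psi'')=0$, so $\psi-\rho\psi''=x\chi$ for some $\chi\colon W\to M$, lift $\chi$ through $\vartheta_0$ to $\chi'\colon W\to G_0$, and then $\vartheta_0'\bigl(\begin{smallmatrix}\chi'\\\psi''\end{smallmatrix}\bigr)=\psi$; together with the automatic left- and middle-exactness of $\mathcal{C}(W,-)$ on an $\mathbb{E}$-triangle this gives the claim. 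Feeding this, the split middle column, and the $\mathcal{C}(\mathcal{GP}(\xi),-)$-exact outer rows and right column into the $3\times3$ diagram, the nine lemma in ${\rm Ab}$ shows that the syzygy $\mathbb{E}$-triangle $K_1\to K_1'\to K_1''$ is also $\mathcal{C}(\mathcal{GP}(\xi),-)$-exact. Hence the input $(K_1\to G_0\to M,\;K_1''\to G_0''\to M'')$ together with the $\mathbb{E}$-triangle $M\to M'\to M''$ is replaced by the input $(K_1\to K_1'\to K_1''$, in $\xi$ and $\mathcal{C}(\mathcal{GP}(\xi),-)$-exact$)$ together with the $\xi$-$\mathcal{G}$projective precovers of $K_1$ and $K_1''$ supplied by $\mathbf{G}$ and $\mathbf{G}''$, and the induction proceeds verbatim, producing $\vartheta'\colon\mathbf{G}'\to M'$ with $\mathbf{G}'_n=\mathbf{G}_n\oplus\mathbf{G}''_n$. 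The construction of $\pi'\colon\mathbf{P}'\to M'$ with $\mathbf{P}'_n=\mathbf{P}_n\oplus\mathbf{P}''_n$ is the same argument with $\mathcal{P}(\xi)$ in place of $\mathcal{GP}(\xi)$, and it is even easier, since every $\mathbb{E}$-triangle in $\xi$ is automatically $\mathcal{C}(\mathcal{P}(\xi),-)$-exact, so no hypothesis on $M\to M'\to M''$ beyond lying in $\xi$ is needed (this is the ordinary Horseshoe Lemma for $\xi$-projective resolutions).

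Finally, the comparison lemma established above gives chain maps $\gamma\colon\mathbf{P}\to\mathbf{G}$ and $\gamma''\colon\mathbf{P}''\to\mathbf{G}''$ with $\pi=\vartheta\gamma$ and $\pi''=\vartheta''\gamma''$. Since $\mathbf{P}'_n=\mathbf{P}_n\oplus\mathbf{P}''_n$ and $\mathbf{G}'_n=\mathbf{G}_n\oplus\mathbf{G}''_n$ with the split inclusions and projections as horizontal maps, I would look for $\gamma'_n=\bigl(\begin{smallmatrix}\gamma_n&\sigma_n\\0&\gamma''_n\end{smallmatrix}\bigr)$; the requirement that $\gamma'$ be a chain map with $\pi'=\vartheta'\gamma'$ reduces, degree by degree, to solving an equation of the form $d^{\mathbf{G}}_n\sigma_n=(\text{a known map }\mathbf{P}''_n\to\mathbf{G}_{n-1})$ whose right-hand side lies in the image of $d^{\mathbf{G}}_n$, which is possible because $\mathbf{P}''_n\in\mathcal{P}(\xi)$ and $\mathbf{G}$ is $\xi$-exact, hence $\mathcal{C}(\mathcal{P}(\xi),-)$-exact. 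This makes the whole diagram \eqref{com} commute, with $\pi'=\vartheta'\gamma'\colon\mathbf{P}'\to M'$ a $\xi$-projective resolution and $\vartheta'\colon\mathbf{G}'\to M'$ a $\xi$-$\mathcal{G}$projective resolution of $M'$, and with the two upper rows split $\mathbb{E}$-triangles in $\xi$. The main obstacle is the base step above: unlike in the abelian setting, where one simply forms the direct sum of syzygies, here one must verify that the newly built $\mathbb{E}$-triangles $K_1'\to G_0\oplus G_0''\to M'$ and $K_1\to K_1'\to K_1''$ genuinely lie in the proper class $\xi$ and remain $\mathcal{C}(\mathcal{GP}(\xi),-)$-exact, which is precisely what forces one to use the base change, cobase change and saturation axioms of $\xi$ together with $(\mathrm{ET4})^{\mathrm{op}}$.
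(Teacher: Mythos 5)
Your proof is correct and is precisely the ``standard argument from relative homological algebra'' that the paper invokes without writing out (the paper gives no proof of this lemma beyond that one-line remark). Your treatment of the only genuinely non-routine point --- realising the middle augmentation $\bigl(\begin{smallmatrix}x\vartheta_0&\rho\end{smallmatrix}\bigr)$ as a composite of two $\xi$-deflations via Lemma~\ref{lem1} and $(\mathrm{ET4})^{\mathrm{op}}$, and checking that the resulting syzygy conflation $K_1\to K_1'\to K_1''$ stays in $\xi$ (base change of the $\beta$-conflation, plus closedness of $\xi$-deflations under composition) and remains $\mathcal{C}(\mathcal{GP}(\xi),-)$-exact (nine lemma after applying $\mathcal{C}(W,-)$) --- uses exactly the toolkit the authors themselves deploy for the analogous gluing in Section~4, so it is faithful to the intended argument; the only stray remark is that the finiteness of the $\xi$-$\mathcal{G}$projective dimensions is not what terminates the induction (the given resolutions need not be finite), but nothing in your construction depends on that.
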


\begin{prop}\label{prop:long-exact2}
Let $N$ be an object in $\mathcal{C}$ and ${\mathbf{M}}:\xymatrix{M\ar[r]^x& M'\ar[r]^y& M'' \ar@{-->}[r]^{\delta}& }$  a $\mathcal{C}(\mathcal{GP}(\xi),-)$-exact $\mathbb{E}$-triangle in $\xi$  such that $\xi\mbox{-}\mathcal{G}{\rm pd}M<\infty$ and $\xi\mbox{-}\mathcal{G}{\rm pd}M''<\infty$.

\begin{itemize}
  \item [(1)] There are homomorphisms $\varepsilon^i_{\mathcal{GP}}(\mathbf{M},N):\xi{\rm xt}^{i}_{\mathcal{GP}(\xi)}(M,N)\to \xi{\rm xt}^{i+1}_{\mathcal{GP}(\xi)}(M'',N)$ natural in $\mathbf{M}$ and $N$ such that the following sequence
\begin{multline*}
  0\longrightarrow \xi{\rm xt}^0_{\mathcal{GP}(\xi)}(M'',N)\longrightarrow\xi{\rm xt}^0_{\mathcal{GP}(\xi)}(M',N)\longrightarrow\xi{\rm xt}^0_{\mathcal{GP}(\xi)}(M,N)\longrightarrow \xi{\rm xt}^1_{\mathcal{GP}(\xi)}(M'',N)\longrightarrow\\
  \cdots\longrightarrow \xi{\rm xt}^{n-1}_{\mathcal{GP}(\xi)}(M,N)\longrightarrow \xi{\rm xt}^{n}_{\mathcal{GP}(\xi)}(M'',N)\longrightarrow\xi{\rm xt}^n_{\mathcal{GP}(\xi)}(M',N)\longrightarrow\xi{\rm xt}^n_{\mathcal{GP}(\xi)}(M,N)\longrightarrow \cdots
\end{multline*}
is exact

  \item [(2)] There are maps $\delta^{i}(M,N): \xi{\rm xt}^{i}_{\mathcal{GP}(\xi)}(M,N'')\rightarrow \xi{\rm xt}^{i}_{\xi}(M,N)$ and \\ $\delta^{i}(M,N): \xi{\rm xt}_{\mathcal{GP}(\xi)}^{i}(M'',N)\rightarrow \xi{\rm xt}^{i}_{\xi}(M'',N)$ such that the following diagram
$$
\xymatrix{\xi{\rm xt}^{i}_{\mathcal{GP}(\xi)}(M,N)\ar[r]^{\varepsilon^i_{\mathcal{GP}}(\mathbf{M},N)}\ar[d]^{\delta^{i}(M,N)}&\xi{\rm xt}^{i+1}_{\mathcal{GP}(\xi)}(M'',N)\ar[d]^{\delta^{i+1}(M'',N)}\\
\xi{\rm xt}^{i}_{\xi}(M,N)\ar[r]^{\varepsilon^i_{\mathcal{P}}(\mathbf{M},N)}&\xi{\rm xt}^{i+1}_{\xi}(M'',N)}
$$
is commutative for each $i\geq 0$.
\end{itemize}
\end{prop}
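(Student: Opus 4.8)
The plan is to derive both parts from a single short exact sequence of cochain complexes produced by applying the functor $\mathcal{C}(-,N)$ to the Horseshoe diagram \eqref{com}. First I would fix $\xi$-projective resolutions $\pi:\mathbf{P}\to M$ and $\pi'':\mathbf{P}''\to M''$ together with $\xi$-$\mathcal{G}$projective resolutions $\vartheta:\mathbf{G}\to M$ and $\vartheta'':\mathbf{G}''\to M''$; the latter exist (and may be taken of finite length) because $\xi\mbox{-}\mathcal{G}{\rm pd}\,M<\infty$ and $\xi\mbox{-}\mathcal{G}{\rm pd}\,M''<\infty$. Since $\mathbf{M}$ is $\mathcal{C}(\mathcal{GP}(\xi),-)$-exact, the Horseshoe Lemma for $\xi$-$\mathcal{G}$projective resolutions applies and yields the commutative diagram \eqref{com}, in which $\vartheta':\mathbf{G}'\to M'$ is a $\xi$-$\mathcal{G}$projective resolution of $M'$, $\pi'=\vartheta'\gamma':\mathbf{P}'\to M'$ is a $\xi$-projective resolution of $M'$, and the two upper rows are split $\mathbb{E}$-triangles in $\xi$; in particular, for each degree $n$ the sequences $P_n\to P'_n\to P''_n$ and $G_n\to G'_n\to G''_n$ are split short exact.

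For part (1): applying the contravariant functor $\mathcal{C}(-,N)$ to the middle row of \eqref{com}, and using that each $G_n\to G'_n\to G''_n$ splits, one obtains a short exact sequence of cochain complexes
$$0\longrightarrow\mathcal{C}(\mathbf{G}'',N)\longrightarrow\mathcal{C}(\mathbf{G}',N)\longrightarrow\mathcal{C}(\mathbf{G},N)\longrightarrow 0.$$
Its long exact cohomology sequence, rewritten through the identifications $H^i\mathcal{C}(\mathbf{G},N)=\xi{\rm xt}^i_{\mathcal{GP}(\xi)}(M,N)$, $H^i\mathcal{C}(\mathbf{G}',N)=\xi{\rm xt}^i_{\mathcal{GP}(\xi)}(M',N)$ and $H^i\mathcal{C}(\mathbf{G}'',N)=\xi{\rm xt}^i_{\mathcal{GP}(\xi)}(M'',N)$ (legitimate by Remark \ref{fact:2.5'}, $\mathbf{G}'$ being a $\xi$-$\mathcal{G}$projective resolution of $M'$), is precisely the asserted exact sequence, and $\varepsilon^i_{\mathcal{GP}}(\mathbf{M},N)$ is defined to be its connecting homomorphism. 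Naturality in $\mathbf{M}$ and in $N$ is obtained in the usual way: a morphism of $\mathcal{C}(\mathcal{GP}(\xi),-)$-exact $\mathbb{E}$-triangles, respectively a morphism $N\to\widetilde{N}$, lifts --- via the comparison lemma preceding Remark \ref{fact:2.5'}, applied column by column --- to a morphism (unique up to homotopy) between the corresponding short exact sequences of complexes, and naturality of the connecting homomorphism finishes the argument; the same reasoning shows that $\varepsilon^i_{\mathcal{GP}}(\mathbf{M},N)$ and the whole sequence do not depend on the chosen resolutions.

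For part (2): the morphisms $\gamma,\gamma',\gamma''$ of \eqref{com} commute with the differentials and with the horizontal maps of the two upper rows, so applying $\mathcal{C}(-,N)$ yields a morphism of short exact sequences of cochain complexes from $0\to\mathcal{C}(\mathbf{G}'',N)\to\mathcal{C}(\mathbf{G}',N)\to\mathcal{C}(\mathbf{G},N)\to0$ to $0\to\mathcal{C}(\mathbf{P}'',N)\to\mathcal{C}(\mathbf{P}',N)\to\mathcal{C}(\mathbf{P},N)\to0$, with vertical components $\mathcal{C}(\gamma'',N)$, $\mathcal{C}(\gamma',N)$, $\mathcal{C}(\gamma,N)$ (the bottom sequence is exact because each $P_n\to P'_n\to P''_n$ splits, and its long exact cohomology sequence is the standard $\xi{\rm xt}_{\xi}$-sequence attached to $\mathbf{M}$). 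Setting $\delta^i(M,N):=H^i\mathcal{C}(\gamma,N):\xi{\rm xt}^i_{\mathcal{GP}(\xi)}(M,N)\to\xi{\rm xt}^i_{\xi}(M,N)$ and $\delta^i(M'',N):=H^i\mathcal{C}(\gamma'',N):\xi{\rm xt}^i_{\mathcal{GP}(\xi)}(M'',N)\to\xi{\rm xt}^i_{\xi}(M'',N)$, the naturality of the connecting homomorphism for this morphism of short exact sequences of complexes gives exactly the commutativity of the square in (2) for every $i\geq0$.

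The only genuinely delicate point is the well-definedness and naturality of $\varepsilon^i_{\mathcal{GP}}$ claimed in part (1): one must check that it is independent of the many choices entering the Horseshoe construction and natural in both variables. This rests on the comparison lemma for $\xi$- and $\xi$-$\mathcal{G}$projective resolutions together with the homotopy invariance of the functors $H^i\mathcal{C}(-,N)$; once these are in place, the remaining steps are the routine bookkeeping of the long exact cohomology sequence of a short exact sequence of complexes and its naturality.
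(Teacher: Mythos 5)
Your proposal is correct and follows essentially the same route as the paper's own proof: construct the diagram \eqref{com} via the Horseshoe Lemma, apply $\mathcal{C}(-,N)$ to the two split upper rows to obtain a morphism of short exact sequences of complexes, and read off the long exact sequence and the commutative square from cohomology and the naturality of the connecting homomorphism. Your additional remarks on well-definedness and naturality in $\mathbf{M}$ and $N$ only make explicit details the paper leaves implicit.
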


\begin{proof}
Since $A$ and $C$ have finite $\xi$-$\mathcal{G}$projective dimensions, we can construct the diagram (\ref{com}).
Moreover, since the two upper rows of (\ref{com}) are split $\mathbb{E}$-triangles in $\xi$, by applying the functor $\mathcal{C}(-,N)$ we can get
a commutative diagram  of complexes
$$
\xymatrix{0\ar[r]&\mathcal{C}(\mathbf{G}'',N)\ar[r]\ar[d]&\mathcal{C}(\mathbf{G}',N)\ar[r]\ar[d]&\mathcal{C}(\mathbf{G},N)\ar[r]\ar[d]&0\\
0\ar[r]&\mathcal{C}(\mathbf{P}'',N)\ar[r]&\mathcal{C}(\mathbf{P}',N)\ar[r]&\mathcal{C}(\mathbf{P},N)\ar[r]&0}
$$
with exact rows.
By taking the homology group, we get the desired long exact sequence and the commutative diagram.
\end{proof}

\section{\bf The Avramov-Martsinkovsky type exact sequence}

In \cite{HZZZ}, we introduced the notion of $\xi$-complete cohomology in an  extriangulated category. In this section, we will give
an Avramov-Martsinkovsky type exact sequence which connects $\xi$-cohomology, $\xi$-Gorenstein cohomology and $\xi$-complete cohomology. In particular,
we can use $\xi$-complete cohomology to measure the distance  between $\xi$-cohomology and $\xi$-Gorenstein cohomology.

We denote by $\textrm{Ch}(\mathcal{C})$ the category of complexes in $\mathcal{C}$; the objects are complexes and morphisms are chain maps. We write the complexes homologically, so an object $\mathbf{X}$ of $\textrm{Ch}(\mathcal{C})$ is of the form
$$\xymatrix@C=2em{\mathbf{X}:=\cdots \ar[r]&X_{n+1}\ar[r]^{d_{n+1}^{\mathbf{X}}}&X_n\ar[r]^{d_n^{\mathbf{X}}}&X_{n-1}\ar[r]&\cdots}.$$
The \emph{$i$th shift} of $\mathbf{X}$ is the complex $\mathbf{X}[i]$ with $n$th component $\mathbf{X}_{n-i}$ and differential $d_n^{\mathbf{X}[i]}=(-1)^{i}d_{n-i}^{\mathbf{X}}$. Assume that $\mathbf{X}$ and $\mathbf{Y}$ are complexes in $\textrm{Ch}(\mathcal{C})$.
A homomorphism $\xymatrix@C=2em{\varphi:\mathbf{X}\ar[r]&\mathbf{Y}}$ of degree $n$ is a family $(\varphi_i)_{i\in\mathbb{Z}}$ of morphisms $\xymatrix@C=2em{\varphi_i:X_i\ar[r]& Y_{i+n}}$ for all $i\in\mathbb{Z}$. In this case, we set $|\varphi|=n$. All such homomorphisms form an abelian group, denoted by $\mathcal{C}(\mathbf{X},\mathbf{Y})_n$, which is identified with $\prod_{i\in \mathbb{Z}}{\rm \mathcal{C}}(X_i,Y_{i+n})$. We let $\mathcal{C}(\mathbf{X},\mathbf{Y})$ be the complex of abelian groups with $n$th component $\mathcal{C}(\mathbf{X},\mathbf{Y})_n$ and differential $d(\varphi_i)=d_{i+n}^{\mathbf{Y}}\varphi_i-(-1)^n\varphi_{i-1}d_i^{\mathbf{X}}$ for $\varphi=(\varphi_i)\in\mathcal{C}(\mathbf{X},\mathbf{Y})_n$.
We refer to \cite{AFH,CFH} for more details.

Let $M$ and $N$ be objects in $\mathcal{C}$.
\begin{enumerate}
\item There are two $\xi$-projective resolutions $\xymatrix@C=2em{\mathbf{P}_M\ar[r]& M}$ and $\xymatrix@C=2em{\mathbf{P}_N\ar[r]& N}$ of $M$ and $N$, respectively. A homomorphism $\beta\in \mathcal{C}(\mathbf{P}_M,\mathbf{P}_N)$ is {\it bounded above} if $\beta_i=0$ for all $i\gg 0$. The subset $\overline{\mathcal{C}}(\mathbf{P}_M,\mathbf{P}_N)$, consisting of all bounded above homomorphisms, is a subcomplex with components
\begin{center}$\overline{\mathcal{C}}(\mathbf{P}_M,\mathbf{P}_N)_n=\{(\varphi_i)\in \mathcal{C}(\mathbf{P}_M,\mathbf{P}_N)_n \ | \ \varphi_i=0$ for all $i\gg 0\}.$\end{center}
We set
\begin{equation}\label{1}
  \widetilde{\mathcal{C}}(\mathbf{P}_M,\mathbf{P}_N)={\mathcal{C}}(\mathbf{P}_M,\mathbf{P}_N)/\overline{\mathcal{C}}(\mathbf{P}_M,\mathbf{P}_N).
\end{equation}

\item There are two $\xi$-injective coresolutions $\xymatrix@C=2em{M\ar[r]&\mathbf{I}_M}$ and $\xymatrix{N\ar[r]&\mathbf{I}_N}$ of $M$ and $N$, respectively. A homomorphism $\beta\in {\mathcal{C}}(\mathbf{I}_M,\mathbf{I}_N)$ is {\it bounded below} if $\beta_i=0$ for all $i\ll 0$. The subset $\underline{\mathcal{C}}(\mathbf{I}_M,\mathbf{I}_N)$, consisting of all bounded below homomorphisms, is a subcomplex with components
\begin{center}$\underline{\mathcal{C}}(\mathbf{I}_M,\mathbf{I}_N)_n=\{(\varphi_i)\in {\mathcal{C}}(\mathbf{I}_M,\mathbf{I}_N)_n \ | \ \varphi_i=0$ for all $i\ll 0\}.$\end{center}
We set
\begin{equation}\label{2}
  \widetilde{\mathcal{C}}(\mathbf{I}_M,\mathbf{I}_N)={\mathcal{C}}(\mathbf{I}_M,\mathbf{I}_N)/\underline{\mathcal{C}}(\mathbf{I}_M,\mathbf{I}_N).
\end{equation}
\end{enumerate}

\begin{definition}\label{df:3.6} {\rm (see \cite[Definition 3.4]{HZZZ})} {\rm Let $M$ and $N$ be objects in $\mathcal{C}$, and let $n$ be an integer.
\begin{enumerate}
\item Using $\xi$-projective resolutions, we define the $n$th \emph{$\xi$-complete cohomology group}, denoted by $\widetilde{\rm \xi xt}_{\mathcal{P}}^n(M,N)$, as
$$\widetilde{\rm \xi xt}_{\mathcal{P}}^n(M,N)=H^n(\widetilde{\mathcal{C}}(\mathbf{P}_M,\mathbf{P}_N)),$$
where $\widetilde{\mathcal{C}}(\mathbf{P}_M,\mathbf{P}_N)$ is the complex (\ref{1}).

\item Using $\xi$-injective coresolutions, we define the $n$th \emph{$\xi$-complete cohomology group}, denoted by $\widetilde{\rm \xi xt}_{\mathcal{I}}^n(M,N)$, as
$$\widetilde{\rm \xi xt}_{\mathcal{I}}^n(M,N)=H^n(\widetilde{\mathcal{C}}(\mathbf{I}_M,\mathbf{I}_N)),$$
where $\widetilde{\mathcal{C}}(\mathbf{I}_M,\mathbf{I}_N)$ is the complex (\ref{2}).
\end{enumerate}}
\end{definition}

\begin{definition}\label{df:3.2} {\rm (see \cite[Definition 4.3]{HZZZ})} {\rm Let $M\in\mathcal{C}$ be an object. A \emph{$\xi$-complete resolution} of $M$ is a diagram $$\xymatrix@C=2em{\mathbf{T}\ar[r]^{\nu}&\mathbf{P}\ar[r]^{\pi}&M}$$ of morphisms of complexes satisfying:
  (1)  $\pi:\mathbf{P}\ra M$ is a $\xi$-projective resolution of $M$;
  (2) $\mathbf{T}$ is a complete $\xi$-projective resolution;
  (3) $\nu:\mathbf{T}\ra \mathbf{P}$ is a morphism such that $\nu_{i}$ $=$ {\rm id$_{T_{i}}$} for all $i\gg 0$.
   Moreover, a $\xi$-complete resolution is \emph{split} if $\nu_{i}$ has a section {(}i.e., there exists a morphism $\eta_{i}:{P}_{i}\rightarrow {T}_{i}$ such that $\nu_{i}\eta_{i}={\rm id}_{{P}_{i}}${)} for all $i\in{\mathbb{Z}}$.
 }
\end{definition}

The following lemma is very key, which shows that one can compute $\xi$-complete cohomology for objects having finite $\xi$-$\mathcal{G}$projective dimension using $\xi$-complete resolutions.

\begin{lem}\label{lem:complete-cohomology} {\rm (see \cite[Theorem 4.6]{HZZZ})} Let $M$ and $N$ be objects in $\mathcal{C}$. If $M$ admits a $\xi$-complete resolution $\xymatrix@C=2em{\mathbf{T}\ar[r]^{\nu}&\mathbf{P}\ar[r]^{\pi}&M,}$ then for any integer $i$, there exists  an isomorphism
$$\widetilde{\xi{\rm xt}}_{\mathcal{P}}^i(M,N)\cong H^i(\mathcal{C}(\mathbf{T},N)).$$
\end{lem}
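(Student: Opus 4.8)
The plan is to exhibit a zig-zag of quasi-isomorphisms linking the complex $\widetilde{\mathcal{C}}(\mathbf{P}_M,\mathbf{P}_N)$, whose cohomology is $\widetilde{\xi{\rm xt}}_{\mathcal{P}}^i(M,N)$, with the complex $\mathcal{C}(\mathbf{T},N)$. First I would fix a $\xi$-projective resolution $\mathbf{P}_N\to N$ and take for $\mathbf{P}_M$ the $\xi$-projective resolution occurring in the given $\xi$-complete resolution $\mathbf{T}\xrightarrow{\nu}\mathbf{P}_M\xrightarrow{\pi}M$. Since the $\xi$-complete cohomology groups $\widetilde{\xi{\rm xt}}_{\mathcal{P}}^i(M,N)$ do not depend on the chosen $\xi$-projective resolutions, they may be computed as $H^i(\widetilde{\mathcal{C}}(\mathbf{P}_M,\mathbf{P}_N))$.

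The first step replaces $\mathbf{P}_M$ by $\mathbf{T}$ in the first variable. Because $\nu_i={\rm id}_{T_i}$ for all $i\gg0$, in each sufficiently large degree the differential of the mapping cone $\mathrm{cone}(\nu)$ contains the identity of $T_{i-1}=P_{M,i-1}$ as a direct summand; Gaussian elimination of these contractible pieces shows that $\mathrm{cone}(\nu)$ is homotopy equivalent to a complex of $\xi$-projectives that vanishes in all high degrees. Passing to the mapping cylinder gives a degreewise split short exact sequence $0\to\mathbf{T}\to\mathrm{Cyl}(\nu)\to\mathrm{cone}(\nu)\to0$ with $\mathrm{Cyl}(\nu)$ homotopy equivalent to $\mathbf{P}_M$. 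Applying $\mathcal{C}(-,\mathbf{P}_N)$ preserves exactness (by degreewise splitness), the subcomplexes of bounded-above homomorphisms form a short exact sequence as well, and the quotients give $0\to\widetilde{\mathcal{C}}(\mathrm{cone}(\nu),\mathbf{P}_N)\to\widetilde{\mathcal{C}}(\mathrm{Cyl}(\nu),\mathbf{P}_N)\to\widetilde{\mathcal{C}}(\mathbf{T},\mathbf{P}_N)\to0$. As $\widetilde{\mathcal{C}}(-,\mathbf{P}_N)$ carries homotopy equivalences to homotopy equivalences, the first term here is contractible (every homomorphism out of a complex homotopy equivalent to a bounded-above one is, up to homotopy, bounded above) and the middle term is homotopy equivalent to $\widetilde{\mathcal{C}}(\mathbf{P}_M,\mathbf{P}_N)$; the long exact cohomology sequence then gives $H^i(\widetilde{\mathcal{C}}(\mathbf{P}_M,\mathbf{P}_N))\cong H^i(\widetilde{\mathcal{C}}(\mathbf{T},\mathbf{P}_N))$ for all $i$.

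The second step computes $H^i(\widetilde{\mathcal{C}}(\mathbf{T},\mathbf{P}_N))$ from $\mathbf{T}$ alone, using two inputs: (a) $\mathbf{T}$ is complete $\mathcal{P}(\xi)$-exact, so that $\mathcal{C}(\mathbf{T},Q)$ is an acyclic complex for every $Q\in\mathcal{P}(\xi)$ (splice the $\mathcal{C}(-,\mathcal{P}(\xi))$-exact $\mathbb{E}$-triangles defining $\mathbf{T}$); and (b) each $T_i\in\mathcal{P}(\xi)$, so $\mathcal{C}(T_i,-)$ sends the $\xi$-exact augmented resolution $\mathbf{P}_N\to N$ to an exact complex. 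I would first run the comparison over the brutal truncations $\sigma_{\le n}\mathbf{T}$: these are bounded above, so pairing them against the bounded-below complex $\mathbf{P}_N$ makes every degree of the relevant Hom double complex a finite product, both of its spectral sequences converge, and (a) together with (b) identify the cohomologies of $\mathcal{C}(\sigma_{\le n}\mathbf{T},\mathbf{P}_N)$, of its bounded-above subcomplex, and of $\mathcal{C}(\sigma_{\le n}\mathbf{T},N)$. Taking the filtered colimit over $n$ — which by definition recovers the bounded-above subcomplex of $\mathcal{C}(\mathbf{T},\mathbf{P}_N)$ and commutes with homology — then yields $H^i(\widetilde{\mathcal{C}}(\mathbf{T},\mathbf{P}_N))\cong H^i(\mathcal{C}(\mathbf{T},N))$. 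Combining the two steps proves the lemma.

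I expect the second step to be the main obstacle. One is handling Hom-complexes out of the \emph{unbounded} complex $\mathbf{T}$, where the naive spectral sequences of the ambient double complexes need not converge, since one is forming infinite products indexed by an unbounded set; the remedy is to work throughout with the bounded-above truncations $\sigma_{\le n}\mathbf{T}$ and then pass to the colimit — which is exactly the role the bounded-above subcomplex plays in the definition of $\widetilde{\mathcal{C}}$ — feeding in the vanishing of (a) at the truncated stage and keeping careful track of which part of the cohomology survives the passage to the quotient and to the colimit.
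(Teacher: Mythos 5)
First, a remark on the comparison itself: the paper offers no proof of this lemma --- it is imported verbatim from \cite[Theorem 4.6]{HZZZ} --- so your proposal can only be judged on its own merits. Your Step 1 is sound: since $\nu_i=\mathrm{id}$ for $i\gg0$, Gaussian elimination shows $\mathrm{cone}(\nu)$ is homotopy equivalent to a bounded-above complex; $\widetilde{\mathcal{C}}(-,\mathbf{P}_N)$ is a functor on the homotopy category that kills bounded-above complexes (every homomorphism into the bounded-below $\mathbf{P}_N$ is then automatically bounded above), and the degreewise split cylinder sequence gives $H^i(\widetilde{\mathcal{C}}(\mathbf{P},\mathbf{P}_N))\cong H^i(\widetilde{\mathcal{C}}(\mathbf{T},\mathbf{P}_N))$. (In fact $\widetilde{\mathcal{C}}(\nu,\mathbf{P}_N)$ is already an isomorphism of complexes, being the identity on ``germs at $+\infty$''.)

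The gap is in Step 2. The filtered colimit you invoke does not exist as described: the only natural chain maps among the $\mathcal{C}(\sigma_{\le n}\mathbf{T},\mathbf{P}_N)$ are the restrictions $\mathcal{C}(\sigma_{\le n+1}\mathbf{T},\mathbf{P}_N)\to\mathcal{C}(\sigma_{\le n}\mathbf{T},\mathbf{P}_N)$, which form an \emph{inverse} system with limit $\mathcal{C}(\mathbf{T},\mathbf{P}_N)$ (where homology does not pass to the limit without a $\varprojlim^1$ argument); extension by zero is not a chain map, since the component $\varphi_n d_{n+1}^{\mathbf{T}}$ of the differential survives. Moreover, what this would recover is $\overline{\mathcal{C}}(\mathbf{T},\mathbf{P}_N)$, i.e.\ the subcomplex you need to prove \emph{acyclic}, not the quotient $\widetilde{\mathcal{C}}(\mathbf{T},\mathbf{P}_N)$ whose cohomology is the target --- your sketch conflates the two. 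Finally, input (a) does not survive brutal truncation: $\mathcal{C}(\sigma_{\le n}\mathbf{T},Q)$ has nonvanishing cohomology at the cut-off degree, so the truncated double complexes no longer have exact rows and the ``identification of cohomologies'' at each finite stage fails. The repair needs no truncation at all; split Step 2 into two acyclicity statements, each matching the totalization to the direction of exactness. (A) Since $\mathbf{P}_N$ is bounded below, every bounded-above homomorphism $\mathbf{T}\to\mathbf{P}_N$ has finite support, so $\overline{\mathcal{C}}(\mathbf{T},\mathbf{P}_N)$ is the direct-sum totalization of the upper half-plane double complex $\mathcal{C}(T_i,P_{N,j})$, whose rows $\mathcal{C}(\mathbf{T},P_{N,j})$ are exact by (a); the staircase argument terminates because each cycle has finite support, so $\overline{\mathcal{C}}(\mathbf{T},\mathbf{P}_N)$ is acyclic and $\mathcal{C}(\mathbf{T},\mathbf{P}_N)\to\widetilde{\mathcal{C}}(\mathbf{T},\mathbf{P}_N)$ is a quasi-isomorphism. (B) Writing $\mathbf{P}_N^{+}$ for the augmented resolution $\cdots\to P_{N,0}\to N\to 0$, the complex $\mathcal{C}(\mathbf{T},\mathbf{P}_N^{+})$ is the \emph{product} totalization of an upper half-plane double complex whose columns $\mathcal{C}(T_i,\mathbf{P}_N^{+})$ are exact by (b); here the staircase starts at the lower bound forced by $\mathbf{P}_N^{+}$ and runs upward in $i$, assembling to a well-defined element of the product, so $\mathcal{C}(\mathbf{T},\mathbf{P}_N^{+})$ is acyclic and $\mathcal{C}(\mathbf{T},\mathbf{P}_N)\to\mathcal{C}(\mathbf{T},N)$ is a quasi-isomorphism. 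Your worry about non-convergent spectral sequences is thus real in general but harmless here: each of the two totalizations pairs with exactness in the favourable direction. Combining (A), (B) and Step 1 completes the proof.
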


Assume that
 $M$ has a $\xi$-complete resolution $\xymatrix@C=2em{\mathbf{T}\ar[r]^{\nu}&\mathbf{P}\ar[r]^{\pi}&M}$ such that $\nu_{i}$ is an isomorphism for each $i\geq n$.
 By \cite[Proposition 4.4]{HZZZ},  there is a split $\xi$-complete resolution $\xymatrix@C=2em{\mathbf{S}\ar[r]^{\mu}&\mathbf{P}\ar[r]^{\pi}&M}$ such that $\mu_{i}$ is an isomorphism for each $i\geq n$. Now we need a new  construction as follows, which seems to be similar to that of \cite[Proposition 4.4 (2) $\Rightarrow$ (3)]{HZZZ} but different.
 By assumption, there is a commutative
diagram
$$\xymatrix@C=2em{\mathbf{T}:=&\cdots\ar[r]&P_{n}\ar@{=}[d]\ar[r]&T_{n-1}\ar[r]\ar[d]^{\nu_{n-1}}&\cdots\ar[r]&T_{1}\ar[r]\ar[d]^{\nu_{1}}&
T_{0}\ar[d]^{\nu_{0}}\ar[r]&T_{-1}\ar[d]^{\nu_{-1}}\ar[r]&\cdots\\
\mathbf{P}:=&\cdots\ar[r]&P_{n}\ar[r]&P_{n-1}\ar[r]&\cdots\ar[r]&P_{1}\ar[r]&
P_{0}\ar[r]&0\ar[r]&\cdots}$$
with the $\mathbb{E}$-triangles $\xymatrix{K_{i+1}\ar[r]^{f_{i}}&P_{i}\ar[r]^{g_i}&K_{i}\ar@{-->}[r]&}$ and $\xymatrix{K'_{i+1}\ar[r]^{f'_{i}}&T_{i}\ar[r]^{g'_i}&K'_{i}\ar@{-->}[r]&}$ (Here $K'_n=K_n$) in $\xi$.
   Then we have the following morphism  of $\mathbb{E}$-triangles in $\xi$
$$\xymatrix{K_n\ar@{=}[d]\ar[r]^{g_{n-1}'}&T_{n-1}\ar[r]^{f_{n-1}'}\ar[d]^{\nu_{n-1}}&K_{n-1}'\ar[d]^{\omega_{n-1}}\ar@{-->}[r]^{\rho_{n-1}}&\\
K_n\ar[r]^{g_{n-1}}&P_{n-1}\ar[r]^{f_{n-1}}&K_{n-1}\ar@{-->}[r]^{\delta_{n-1}}&.}$$
Moreover, for any integer $i<n$, we have the following morphism of $\mathbb{E}$-triangles in $\xi$
$$\xymatrix{K_{i}'\ar[d]^{\omega_{i}}\ar[r]^{g_{i-1}'}&T_{i-1}\ar[r]^{f_{n-1}'}\ar[d]^{\nu_{i-1}}&K_{i-1}'\ar[d]^{\omega_{i-1}}\ar@{-->}[r]^{\rho_{i-1}}&\\
K_i\ar[r]^{g_{i-1}}&P_{i-1}\ar[r]^{f_{i-1}}&K_{i-1}\ar@{-->}[r]^{\delta_{i-1}}&.}$$
By \cite[Lemma 4.1]{HZZZ}, there is an $\mathbb{E}$-triangle in $\xi$
$$\xymatrix{K_n\ar[r]^{\tiny\begin{bmatrix}-g_{n-1}\\g_{n-1}'\end{bmatrix}\ \ \ \ \ \ \ }&P_{n-1}\oplus T_{n-1}\ar[r]^{\tiny \begin{bmatrix}1&\nu_{n-1}\\0&f_{n-1}'\end{bmatrix}\ \ \ }&P_{n-1}\oplus K_{n-1}'\ar@{-->}[r]^{\ \ \ \ \ \ \ \ \ \ \ \tiny\begin{bmatrix}0&1\end{bmatrix}^*\rho_{n-1}}&.}$$
 Since the morphism $\tiny\begin{bmatrix}1&0\end{bmatrix}:P_{n-1}\oplus K_{n-1}'\rightarrow P_{n-1}$ is a split epimorphism, and  it is a $\xi$-deflation. Hence ${\tiny\begin{bmatrix}1&\nu_{n-1}\end{bmatrix}=\begin{bmatrix}1&0\end{bmatrix}\begin{bmatrix}1&\nu_{n-1}\\0&f_{n-1}'\end{bmatrix}}: P_{n-1}\oplus T_{n-1}\rightarrow P_{n-1}$ is a $\xi$-deflation by \cite[Corollary 3.5]{HZZ}. Let $\xymatrix{L_{n-1}\ar[r]&P_{n-1}\oplus T_{n-1}\ar[r]^-{{\tiny\begin{bmatrix}1&\nu_{n-1}\end{bmatrix}}}&P_{n-1}\ar@{-->}[r]&}$ be an $\mathbb{E}$-triangle in $\xi$. Moreover, by  \cite[Lemma 3.7(2)]{HZZ} one has an $\mathbb{E}$-triangle
 $\xymatrix{K_n\ar[r]^{\tiny\begin{bmatrix}0\\g_{n-1}'\end{bmatrix}\ \ \ \ \ \ \ }&P_{n-1}\oplus T_{n-1}\ar[r]^{\tiny \begin{bmatrix}1&0\\0&f_{n-1}'\end{bmatrix}\ \ \ }&P_{n-1}\oplus K_{n-1}'\ar@{-->}[r]^{\ \ \ \ \ \ \ \ \ \ \ \tiny\begin{bmatrix}0&1\end{bmatrix}^*\rho_{n-1}}&}$ in $\xi$. By \cite[Lemma 5.9]{NP}, there is a commutative diagram
 $$
 \xymatrix{
 0\ar[r]\ar[d]&L_{n-1}\ar[r]\ar[d]&K_{n-1}''\ar[d]\ar@{-->}[r]&\\
 K_n\ar@{=}[d]\ar[r]^{\tiny\begin{bmatrix}0\\g_{n-1}'\end{bmatrix}\ \ \ \ \ \ \ }&P_{n-1}\oplus T_{n-1}\ar[d]^{\tiny\begin{bmatrix}1&\nu_{n-1}\end{bmatrix}}\ar[r]^{\tiny \begin{bmatrix}1&0\\
 0&f_{n-1}'\end{bmatrix}\ \ \ }&P_{n-1}\oplus K_{n-1}'\ar[d]_{\eta_{n-1}}\ar@{-->}[r]^{\ \ \ \ \ \ \ \ \ \ \ \tiny\begin{bmatrix}0&1\end{bmatrix}^*\rho_{n-1}}&\\
K_n\ar[r]^{g_{n-1}}\ar@{-->}[d]&P_{n-1}\ar@{-->}[d]\ar[r]^{f_{n-1}}&K_{n-1}\ar@{-->}[d]\ar@{-->}[r]^{\delta_{n-1}}&\\
&&&
 }
 $$
in which all rows and columns are $\mathbb{E}$-triangles.
Dual to \cite[Lemma 3.7(2)]{HZZ}, there exists an $\mathbb{E}$-triangle
$$\xymatrix{P_{n-1}\oplus K_{n-1}'\ar[r]^{\tiny\begin{bmatrix}1&0\\0&g_{n-2}'\end{bmatrix}\ \ \ }&P_{n-1}\oplus T_{n-2}\ar[r]^{\tiny \ \ \ \ \begin{bmatrix}0&f_{n-2}'\end{bmatrix}}& K_{n-2}'\ar@{-->}[r]^{\ \  \tiny\begin{bmatrix}0\\1\end{bmatrix}_*\rho_{n-2}}&,}$$
which is also in $\xi$ because $\xi$ is closed under cobase change. Since $K_{n-2}'\in\mathcal{GP}(\xi)$, by \cite[Lemma 4.10(2)]{HZZ}  we have the following morphism of $\mathbb{E}$-triangles in $\xi$
$$\xymatrix@C=4em{P_{n-1}\oplus K_{n-1}'\ar[d]_{\eta_{n-1}}\ar[r]^{\tiny\begin{bmatrix}1&0\\0&g_{n-2}'\end{bmatrix}}
&P_{n-1}\oplus T_{n-2}\ar[r]^{\ \ \tiny\begin{bmatrix}0&f_{n-2}'\end{bmatrix}}\ar[d]^{\gamma_{n-2}=[\gamma_{n-2}' \ \gamma_{n-2}']}
&K_{n-2}'\ar[d]^{\omega_{n-2}}\ar@{-->}[r]^{\ \ \tiny\begin{bmatrix}0\\1\end{bmatrix}_*\rho_{n-2}}&\\
K_{n-1}\ar[r]^{g_{n-2}}&P_{n-2}\ar[r]^{f_{n-2}}&K_{n-2}\ar@{-->}[r]^{\delta_{n-2}}&.}$$
Set $g_{n-2}\eta_{n-1}=[\alpha' \ \alpha'']$. By \cite[Lemma 4.1]{HZZZ}, there is an $\mathbb{E}$-triangle
$$\xymatrix@C=4em{P_{n-1}\oplus K_{n-1}'\ar[r]^{\tiny\begin{bmatrix}\alpha'&\alpha''\\1&0\\0&g_{n-2}'\end{bmatrix}\ \ \ \ \ \ }
&P_{n-2}\oplus P_{n-1}\oplus T_{n-2}\ar[r]^{\ \ \tiny\begin{bmatrix}1&\gamma_{n-2}'&\gamma_{n-2}'\\0&0&f_{n-2}'\end{bmatrix}}
&P_{n-2}\oplus K_{n-2}'\ar@{-->}[r]^{\ \ \ \ \ \tiny\begin{bmatrix}0&1\end{bmatrix}^*\begin{bmatrix}0\\1\end{bmatrix}_*\rho_{n-2}}&.}$$
Then $[1\ \ \gamma_{n-2}]=[1\ \ 0]{\  \tiny\begin{bmatrix}1&\gamma_{n-2}'&\gamma_{n-2}'\\0&0&f_{n-2}'\end{bmatrix}}$ is a $\xi$-deflation, and thus
there is an $\mathbb{E}$-triangle $$\xymatrix{L_{n-2}\ar[r]&P_{n-2}\oplus P_{n-1}\oplus T_{n-2}\ar[r]^-{[1\ \ \gamma_{n-2}]}&P_{n-2}\ar@{-->}[r]&}$$ in $\xi$.
By \cite[Lemma 5.9]{NP}, we have the following commutative diagram
$$\xymatrix@C=4em{K_{n-1}''\ar[r]\ar[d]&L_{n-2}\ar[r]\ar[d]&K_{n-2}''\ar@{-->}[r]\ar[d]&\\
P_{n-1}\oplus K_{n-1}'\ar[d]_{\eta_{n-1}}\ar[r]^{\tiny\begin{bmatrix}0&0\\1&0\\0&g_{n-2}'\end{bmatrix}\ \ \ \ \ \ }
&P_{n-2}\oplus P_{n-1}\oplus T_{n-2}\ar[r]^{\ \ \tiny\begin{bmatrix}1&0&0\\0&0&f_{n-2}'\end{bmatrix}}\ar[d]^{[1\ \ \gamma_{n-2}]}
&P_{n-2}\oplus K_{n-2}'\ar[d]^{\eta_{n-2}}\ar@{-->}[r]^{\ \ \ \ \ \tiny\begin{bmatrix}0&1\end{bmatrix}^*\begin{bmatrix}0\\1\end{bmatrix}_*\rho_{n-2}}&\\
K_{n-1}\ar[r]^{g_{n-2}}\ar@{-->}[d]&P_{n-2}\ar[r]^{f_{n-2}}\ar@{-->}[d]&K_{n-2}\ar@{-->}[r]^{\delta_{n-2}}\ar@{-->}[d]&\\
&&&
}$$
in which all rows and columns are $\mathbb{E}$-triangles.
By proceeding in this manner, we set
$$S_i=\left\{
\begin{array}{cc}
P_i& i\geq n\\
P_{n-1}\oplus T_{n-1}& i=n-1\\
P_i\oplus P_{i+1}\oplus T_i& i<n-1
\end{array}\right.
$$
$$\mu_i=\left\{
\begin{array}{cc}
1& i\geq n\\
{\tiny\begin{bmatrix}1&\nu_{n-1}\end{bmatrix}}& i=n-1\\
{\tiny\begin{bmatrix}1&\gamma_i'&\gamma_i''\end{bmatrix}}&0\leq i<n-1\\
0&i<0
\end{array}\right.
$$
Consequently, we get a commutative diagram
\begin{equation}\label{diag1}
\begin{split}
  \xymatrix@C=2em{
\mathbf{L}\ar[d]^\varsigma&\cdots\ar[r]&0\ar[r]\ar[d]&L_{n-1}\ar[r]\ar[d]&\cdots\ar[r]&L_1\ar[r]\ar[d]&L_0\ar[r]\ar[d]&S_{-1}\ar[r]\ar@{=}[d]&\cdots\\
\mathbf{S}\ar[d]^\mu&\cdots\ar[r]&P_{n}\ar@{=}[d]\ar[r]&S_{n-1}\ar[r]\ar[d]^{\mu_{n-1}}&\cdots\ar[r]&S_{1}\ar[r]\ar[d]^{\mu_{1}}&
S_{0}\ar[d]^{\mu_{0}}\ar[r]&S_{-1}\ar[d]^{\mu_{-1}}\ar[r]&\cdots\\
\mathbf{P}&\cdots\ar[r]&P_{n}\ar[r]&P_{n-1}\ar[r]&\cdots\ar[r]&P_{1}\ar[r]&
P_{0}\ar[r]&0\ar[r]&\cdots.}
\end{split}
\end{equation}
Note that every $S_i$ is $\xi$-projective, and $\mathbf{S}$ is obtained by pasting together those $\mathbb{E}$-triangles $$\xymatrix{K_n\ar[r]&P_{n-1}\oplus T_{n-1}\ar[r]&P_{n-1}\oplus K_{n-1}'\ar@{-->}[r]&}$$ and $$\xymatrix{P_i\oplus K_i'\ar[r]&P_{i-1}\oplus P_i\oplus T_{i-1}\ar[r]&P_{i-1}\oplus K_{i-1}'\ar@{-->}[r]&}$$ for all $i<n$, then the complex $\mathbf{S}$ is $\xi$-exact and $\mathcal{C}(-,\mathcal{P}(\xi))$-exact.
Moreover, since all columns are split $\mathbb{E}$-triangles, we can get the top row  is $\mathcal{C}(-,\mathcal{P}(\xi))$-exact. In particular, $\mathbf{L}$ is
a $\xi$-exact complex.

Now we  give
an Avramov-Martsinkovsky type exact sequence in extriangulated  category as follows.

\begin{thm}\label{AM}
Assume that
 $M$ admits a $\xi$-complete resolution $\xymatrix@C=2em{\mathbf{T}\ar[r]^{\nu}&\mathbf{P}\ar[r]^{\pi}&M}$. Then there are homomorphisms natural in $M$ and $N$,
 such that the following sequence
 \begin{multline*}
   \xymatrix@C=0.5cm{0\ar[r]&K\ar[r]&{\xi}{\rm xt}^1_{\mathcal{GP}(\xi)}(M,N)\ar[r]&{\xi}{\rm xt}^1_{\xi}(M,N)\ar[r]&\widetilde{{\xi}{\rm xt}}^1_{\mathcal{P}}(M,N)\ar[r]&{\xi}{\rm xt}^2_{\mathcal{GP}(\xi)}(M,N)\ar[r]&\cdots} \\
   \xymatrix@C=0.5cm{\cdots\ar[r]&\widetilde{{\xi}{\rm xt}}^{i-1}_{\mathcal{P}}(M,N)\ar[r]&{\xi}{\rm xt}^i_{\mathcal{GP}(\xi)}(M,N)\ar[r]&{\xi}{\rm xt}^i_{\xi}(M,N)\ar[r]&\widetilde{{\xi}{\rm xt}}^i_{\mathcal{P}}(M,N)\ar[r]&\cdots}
 \end{multline*}
 is exact.
\end{thm}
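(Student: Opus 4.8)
The plan is to apply the functor $\mathcal{C}(-,N)$ to the short exact sequence of complexes underlying the diagram (\ref{diag1}) and to extract the asserted sequence from the resulting long exact cohomology sequence. The columns of (\ref{diag1}) are split $\mathbb{E}$-triangles $L_i\to S_i\xrightarrow{\mu_i}P_i$, so $0\to\mathbf{L}\to\mathbf{S}\xrightarrow{\mu}\mathbf{P}\to0$ is a short exact sequence of complexes which splits in each degree; hence
$$0\longrightarrow\mathcal{C}(\mathbf{P},N)\longrightarrow\mathcal{C}(\mathbf{S},N)\longrightarrow\mathcal{C}(\mathbf{L},N)\longrightarrow0$$
is a short exact sequence of complexes of abelian groups, and I take its long exact cohomology sequence. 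Two of the three terms are identified at once: $H^i\mathcal{C}(\mathbf{P},N)=\xi{\rm xt}^i_{\xi}(M,N)$ by definition, and, since $\mathbf{S}\to\mathbf{P}\to M$ is a $\xi$-complete resolution of $M$, Lemma \ref{lem:complete-cohomology} gives $H^i\mathcal{C}(\mathbf{S},N)\cong\widetilde{\xi{\rm xt}}^i_{\mathcal{P}}(M,N)$. So everything reduces to the identification $H^i\mathcal{C}(\mathbf{L},N)\cong\xi{\rm xt}^{i+1}_{\mathcal{GP}(\xi)}(M,N)$, at least for $i\geq1$.

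This identification is the heart of the matter, and the plan is to extract an honest $\xi$-$\mathcal{G}$projective resolution of $M$ of length $n=\xi\mbox{-}\mathcal{G}{\rm pd}\,M$ from $\mathbf{L}$. Since each column of (\ref{diag1}) is split one has $S_i\cong L_i\oplus P_i$, so every $L_i$ is $\xi$-projective ($\mathcal{P}(\xi)$ being closed under summands); moreover the top row of (\ref{diag1}) is $\mathcal{C}(-,\mathcal{P}(\xi))$-exact, and comparing homology in $0\to\mathbf{L}\to\mathbf{S}\to\mathbf{P}\to0$ (where $\mathbf{S}$ is $\xi$-exact) shows $H_{-1}(\mathbf{L})\cong M$ while $\mathbf{L}$ is $\xi$-exact in all other degrees. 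Put $C=\ker\!\big(d^{\mathbf{L}}_{-1}\big)$, a $\xi$-syzygy of $\mathbf{L}$: it lies in the all-$\xi$-projective, $\mathcal{C}(-,\mathcal{P}(\xi))$-exact part of $\mathbf{L}$, hence $C\in\mathcal{GP}(\xi)$; and $d^{\mathbf{L}}_0$ factors through $C$ and induces a conflation $\Omega\to C\to M$ with $\Omega=\im d^{\mathbf{L}}_0$, which has finite $\xi$-projective dimension because $0\to L_{n-1}\to\cdots\to L_0\to\Omega\to0$ is exact. Splicing this conflation onto the bounded $\xi$-projective complex $0\to L_{n-1}\to\cdots\to L_0$ yields an exact sequence $0\to L_{n-1}\to\cdots\to L_0\to C\to M\to0$; every $\mathbb{E}$-triangle occurring in it has a $\xi$-projective or $\xi$-$\mathcal{G}$projective middle term and end terms of finite $\xi$-projective dimension, hence is $\mathcal{C}(\mathcal{GP}(\xi),-)$-exact by \cite[Lemma 3.5]{HZZ1} — the same mechanism used in the proof of Theorem \ref{thm:balance-of-gorenstein} — so it is a $\xi$-$\mathcal{G}$projective resolution of $M$. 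Its Hom-complex, shifted by one, agrees with $\mathcal{C}(\mathbf{L},N)$ in all non-negative cohomological degrees, whence $H^i\mathcal{C}(\mathbf{L},N)\cong\xi{\rm xt}^{i+1}_{\mathcal{GP}(\xi)}(M,N)$ for $i\geq1$, while in degree $0$ one obtains only a natural epimorphism $H^0\mathcal{C}(\mathbf{L},N)\twoheadrightarrow\xi{\rm xt}^{1}_{\mathcal{GP}(\xi)}(M,N)$.

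Assembling, substitute the three identifications into the long exact cohomology sequence of the displayed short exact sequence of complexes; in degrees $i\geq1$ this gives verbatim the sequence
$$\cdots\to\widetilde{\xi{\rm xt}}^{i-1}_{\mathcal{P}}(M,N)\to\xi{\rm xt}^i_{\mathcal{GP}(\xi)}(M,N)\to\xi{\rm xt}^i_{\xi}(M,N)\to\widetilde{\xi{\rm xt}}^i_{\mathcal{P}}(M,N)\to\cdots,$$
and at the left end one sets $K=\ker\!\big(\xi{\rm xt}^1_{\mathcal{GP}(\xi)}(M,N)\to\xi{\rm xt}^1_{\xi}(M,N)\big)$, which by the long exact sequence coincides with the image of the connecting map out of $\widetilde{\xi{\rm xt}}^0_{\mathcal{P}}(M,N)$; a short diagram chase through the epimorphism $H^0\mathcal{C}(\mathbf{L},N)\twoheadrightarrow\xi{\rm xt}^1_{\mathcal{GP}(\xi)}(M,N)$ checks exactness at $\xi{\rm xt}^1_{\mathcal{GP}(\xi)}(M,N)$. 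Naturality in $M$ and $N$ follows from functoriality of $\mathcal{C}(-,N)$ and of the connecting homomorphisms, together with the fact that the comparison morphisms between the various ($\xi$-projective, $\xi$-$\mathcal{G}$projective, and complete) resolutions are unique up to homotopy, as in the comparison lemma established above. The step I expect to be the main obstacle is the extraction of a genuine $\xi$-$\mathcal{G}$projective resolution of $M$ from $\mathbf{L}$ — in particular verifying $C\in\mathcal{GP}(\xi)$ and the $\mathcal{C}(\mathcal{GP}(\xi),-)$-exactness of all the spliced $\mathbb{E}$-triangles — together with the careful bookkeeping needed in cohomological degree $0$ to pin down the edge term $K$.
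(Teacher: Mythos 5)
Your proposal is correct and takes essentially the same route as the paper: both arguments rest on the diagram (\ref{diag1}), the degreewise-split short exact sequence of Hom-complexes it induces, and the identification of the cohomology of the subquotient complex with $\xi{\rm xt}^{*}_{\mathcal{GP}(\xi)}(M,N)$ via the $\xi$-$\mathcal{G}$projective resolution $0\to L_{n-1}\to\cdots\to L_0\to C\to M\to 0$, where your syzygy $C$ is exactly the paper's $G_0=K_0'\oplus P_0$ (a syzygy of the complete resolution $\mathbf{S}$, which is the cleanest way to see $C\in\mathcal{GP}(\xi)$, since the all-projective part of $\mathbf{L}$ alone is bounded above and so is not itself a complete resolution). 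The only difference is organizational: the paper truncates $\mathbf{S}$ and $\mathbf{L}$ at degree $-1$ into $\mathbf{S}_{\succeq -1}$ and $\mathbf{G}[-1]$ before applying $\mathcal{C}(-,N)$, so the Gorenstein term appears on the nose in every degree and the degree-zero surjection and diagram chase you defer are not needed.
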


\begin{proof}
Assume that
 $M$ has a $\xi$-complete resolution $\xymatrix@C=2em{\mathbf{T}\ar[r]^{\nu}&\mathbf{P}\ar[r]^{\pi}&M}$ such that $\nu_{i}$ is an isomorphism for each $i\geq n$. By the previous argument, we have the diagram (\ref{diag1}). In particular, we have a commutative diagram
\begin{equation}\label{diag2}
\begin{split}
  \xymatrix@C=2em{
\mathbf{G}[-1]:\ar[d]&\cdots\ar[r]&0\ar[r]\ar[d]&L_{n-1}\ar[r]\ar[d]&\cdots\ar[r]&L_1\ar[r]\ar[d]&L_0\ar[r]\ar[d]&K_0'\oplus P_0\ar[r]\ar@{=}[d]&0\\
\mathbf{S}_{\succeq -1}:\ar[d] &\cdots\ar[r]&P_{n}\ar@{=}[d]\ar[r]&S_{n-1}\ar[r]\ar[d]^{\mu_{n-1}}&\cdots\ar[r]&S_{1}\ar[r]\ar[d]^{\mu_{1}}&
S_{0}\ar[d]^{\mu_{0}}\ar[r]&K_0'\oplus P_0\ar[d]\ar[r]&0\\
\mathbf{P}:&\cdots\ar[r]&P_{n}\ar[r]&P_{n-1}\ar[r]&\cdots\ar[r]&P_{1}\ar[r]&
P_{0}\ar[r]&0\ar[r]&0.}
\end{split}
\end{equation}
  Set $G_i=L_{i-1}$ for each $1\leq i\leq n$ and $G_0=K_0'\oplus P_0$. Then each $G_i$ is $\xi$-projective for $1\leq i\leq n$ and $G_0$ is $\xi$-$\mathcal{G}$projective. In the relevant $\mathbb{E}$-triangle $\xymatrix{K_{i+1}''\ar[r]&L_i\ar[r]&K_i''\ar@{-->}[r]&}$ for each $i\geq 0$, the object
  $\xi\mbox{-}{\rm pd}K_{i+1}''<\infty$, thus the induced sequence
  $$
  0\to \mathcal{C}(G,K_{i+1}'')\to \mathcal{C}(G,L_i)\to \mathcal{C}(G,K_i'')\to 0
  $$
  is exact for any $G\in\mathcal{GP}(\xi)$. This means that the relevant $\mathbb{E}$-triangle $\xymatrix{K_{i+1}''\ar[r]&L_i\ar[r]&K_i''\ar@{-->}[r]&}$ is $\mathcal{C}(\mathcal{GP}(\xi),-)$-exact for each $i\geq 0$, and hence we obtain a $\xi$-$\mathcal{G}$projective resolution $\mathbf{G}\to M$:
  $$
  0\to G_n\to \cdots\to G_1\to G_0\to M\to 0.
  $$
  Now since the columns in the diagram (\ref{diag2}) are split $\mathbb{E}$-triangles, one has an exact sequence of complexes
  $$
  0\to \mathcal{C}(\mathbf{P},N)\to  \mathcal{C}(\mathbf{S}_{\succeq -1},N)\to  \mathcal{C}(\mathbf{G}[-1],N)\to 0.
  $$
  This shows that there is a long exact sequence
  \begin{multline*}
   0\to H^{-1} \mathcal{C}(\mathbf{S}_{\succeq -1},N)\to H^{-1}\mathcal{C}(\mathbf{G}[-1],N)\to H^0\mathcal{C}(\mathbf{P},N)\to H^0  \mathcal{C}(\mathbf{S}_{\succeq -1},N) \\
    \to H^0 \mathcal{C}(\mathbf{G}[-1],N) \to H^1 \mathcal{C}(\mathbf{P},N)\to H^1 \mathcal{C}(\mathbf{S}_{\succeq -1},N)\to H^1\mathcal{C}(\mathbf{G}[-1],N)\to \cdots \\
   \to H^{i-1} \mathcal{C}(\mathbf{G}[-1],N)\to H^i\mathcal{C}(\mathbf{P},N)\to  H^i\mathcal{C}(\mathbf{S}_{\succeq -1},N)\to  H^i\mathcal{C}(\mathbf{G}[-1],N)\to \cdots.
  \end{multline*}
  Notice that $H^{i-1} \mathcal{C}(\mathbf{G}[-1],N)=\xi{\rm xt}^i_{\mathcal{GP}(\xi)}(M,N)$, and $H^i\mathcal{C}(\mathbf{P},N)=\xi{\rm xt}^i_{\xi}(M,N)$ for any $i\geq 0$. Moreover, since $\xymatrix@C=2em{\mathbf{S}\ar[r]&\mathbf{P}\ar[r]&M}$ is a $\xi$-complete resolution of $M$, one has
  $H^i\mathcal{C}(\mathbf{S}_{\succeq -1},N)=\widetilde{\xi{\rm xt}}^i_{\mathcal{P}}(M,N)$ for any $i\geq 1$.
  Finally, by setting $K={\rm Ker}(H^0 \mathcal{C}(\mathbf{G}[-1],N) \to H^1 \mathcal{C}(\mathbf{P},N))$, we can get the desired long exact sequence.
\end{proof}

\begin{rem}\label{rem:4.5} Note that extriangulated categories are a simultaneous generalization of abelian categories and triangulated categories. It follows that  Theorem \ref{AM} here unifies Theorem 7.1 proved by Avramov and Martsinkovsky \cite{AM} in the category of modules, and Theorem 4.10 proved by Ren, Zhao and Liu \cite{RL3} in a triangulated category. It should be noted that our results here are new for  exact categories and extension-closed subcategories of triangulated categories.
\end{rem}

\begin{cor}
Let $M\in\widetilde{\mathcal{GP}}(\xi)$. Then the following are equivalent:
\begin{itemize}
  \item [(1)] $\xi$-$\mathcal{G}{\rm pd}M\leq n$.
  \item [(2)] $\xi{\rm xt}^{i}_{\mathcal{GP}(\xi)}(M,N)=0$ for all $i\geq n+1$ and all $N\in\mathcal{C}$.
  \item [(3)] The maps $\widetilde{\varepsilon}^i_{\mathcal{P}}(M,N):\xi{\rm xt}^i_{\xi}(M,N)\to \widetilde{\xi{\rm xt}}^i_{\mathcal{P}}(M,N)$ are bijective for all $i\geq n+1$ and all $N\in\mathcal{C}$.
  \item[(4)] $\xi{\rm xt}^i_{\xi}(M,Q)=0$ for all $i\geq n+1$ and all $Q\in\widetilde{\mathcal{P}}(\xi)$.
  \item[(5)] $\xi{\rm xt}^i_{\xi}(M,Q)=0$ for all $i\geq n+1$ and all $Q\in{\mathcal{P}}(\xi)$.
\end{itemize}
\end{cor}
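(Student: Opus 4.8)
The plan is to prove the equivalences by combining the Avramov--Martsinkovsky sequence of Theorem \ref{AM} with the characterization of $\xi$-$\mathcal{G}$projective dimension from \cite{HZZ} and the vanishing behaviour of $\xi$-cohomology on objects of finite $\xi$-projective dimension. The natural cycle of implications is $(1)\Rightarrow(2)\Rightarrow(3)\Rightarrow(4)\Rightarrow(5)\Rightarrow(1)$, with each arrow drawing on material already available in the excerpt.

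\textbf{$(1)\Rightarrow(2)$.} If $\xi$-$\mathcal{G}{\rm pd}M\leq n$, then by the discussion after Definition of $\xi$-$\mathcal{G}$projective resolutions (using \cite[Proposition 5.5]{HZZ}) there is a $\xi$-$\mathcal{G}$projective resolution $\mathbf{G}\to M$ of length $n$, i.e. with $G_i=0$ for $i>n$. By Remark \ref{fact:2.5'} the groups $\xi{\rm xt}^i_{\mathcal{GP}(\xi)}(M,N)={\rm H}^i\mathcal{C}(\mathbf{G},N)$ are independent of the chosen resolution, and computing with this bounded one gives $\xi{\rm xt}^i_{\mathcal{GP}(\xi)}(M,N)=0$ for all $i\geq n+1$ and all $N$.

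\textbf{$(2)\Rightarrow(3)$.} Feed the vanishing of $(2)$ into the long exact sequence of Theorem \ref{AM}. For $i\geq n+1$ the terms $\xi{\rm xt}^i_{\mathcal{GP}(\xi)}(M,N)$ and $\xi{\rm xt}^{i+1}_{\mathcal{GP}(\xi)}(M,N)$ both vanish, so the portion $\xi{\rm xt}^i_{\mathcal{GP}(\xi)}(M,N)\to\xi{\rm xt}^i_\xi(M,N)\xrightarrow{\widetilde{\varepsilon}^i_{\mathcal{P}}}\widetilde{\xi{\rm xt}}^i_{\mathcal{P}}(M,N)\to\xi{\rm xt}^{i+1}_{\mathcal{GP}(\xi)}(M,N)$ forces $\widetilde{\varepsilon}^i_{\mathcal{P}}(M,N)$ to be an isomorphism. (Here one should double-check the indexing of the sequence at the low end, but for $i\geq n+1\geq 2$ there is no edge issue.)

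\textbf{$(3)\Rightarrow(4)$ and $(4)\Rightarrow(5)$.} For $(3)\Rightarrow(4)$, take $Q\in\widetilde{\mathcal{P}}(\xi)$; since $Q$ has finite $\xi$-projective dimension, a bounded $\xi$-projective resolution of $Q$ shows $\widetilde{\xi{\rm xt}}^i_{\mathcal{P}}(M,Q)=0$ for $i\gg 0$, and in fact (by the analogue of the classical fact that complete cohomology vanishes when one variable has finite projective dimension) $\widetilde{\xi{\rm xt}}^i_{\mathcal{P}}(M,Q)=0$ for all $i$; combined with $(3)$ this gives $\xi{\rm xt}^i_\xi(M,Q)\cong\widetilde{\xi{\rm xt}}^i_{\mathcal{P}}(M,Q)=0$ for $i\geq n+1$. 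The implication $(4)\Rightarrow(5)$ is trivial since $\mathcal{P}(\xi)\subseteq\widetilde{\mathcal{P}}(\xi)$.

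\textbf{$(5)\Rightarrow(1)$.} This is the crux and the expected main obstacle. Starting from a $\xi$-$\mathcal{G}$projective resolution $\mathbf{G}\to M$ of finite length, let $K$ be the $n$th $\xi$-syzygy, sitting in an $\mathbb{E}$-triangle with kernel built from $\xi$-$\mathcal{G}$projectives; one wants $K\in\mathcal{GP}(\xi)$. The standard strategy is to show $\xi{\rm xt}^1_\xi(K,Q)=0$ for all $Q\in\mathcal{P}(\xi)$ (which follows from $(5)$ by dimension shifting along the $\xi$-$\mathcal{G}$projective resolution, since the higher syzygies have finite $\xi$-$\mathcal{G}$pd and the relevant $\mathbb{E}$-triangles are $\mathcal{C}(\mathcal{GP}(\xi),-)$-exact), and then invoke the characterization of $\mathcal{GP}(\xi)$ — essentially that an object $X$ with $\xi{\rm xt}^{\geq 1}_\xi(X,\mathcal{P}(\xi))=0$ admitting a suitable coresolution is $\xi$-$\mathcal{G}$projective — as established in \cite{HZZ}. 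Assembling these, $\xi$-$\mathcal{G}{\rm pd}M\leq n$. The delicate point throughout is to keep careful track of which $\mathbb{E}$-triangles are $\mathcal{C}(\mathcal{GP}(\xi),-)$-exact so that the dimension-shifting isomorphisms in the relative setting are legitimate; that bookkeeping, rather than any single hard idea, is where the real work lies.
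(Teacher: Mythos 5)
Your argument is correct in substance, but it is organized quite differently from the paper's proof, which is a three-line affair: the paper gets $(1)\Leftrightarrow(3)$ directly from \cite[Proposition 3.7]{HZZZ2}, gets $(1)\Leftrightarrow(4)\Leftrightarrow(5)$ from \cite[Theorem 3.8]{HZZ}, notes $(1)\Rightarrow(2)$ is clear, and uses Theorem \ref{AM} only for $(2)\Rightarrow(3)$. You instead close a single cycle $(1)\Rightarrow(2)\Rightarrow(3)\Rightarrow(4)\Rightarrow(5)\Rightarrow(1)$, which forces you to supply two links the paper outsources: for $(3)\Rightarrow(4)$ the vanishing $\widetilde{\xi{\rm xt}}^i_{\mathcal{P}}(M,Q)=0$ for $Q\in\widetilde{\mathcal{P}}(\xi)$ (this is fine here: $M\in\widetilde{\mathcal{GP}}(\xi)$ admits a $\xi$-complete resolution $\mathbf{T}$, Lemma \ref{lem:complete-cohomology} reduces the claim to exactness of $\mathcal{C}(\mathbf{T},Q)$, and one inducts on $\xi$-${\rm pd}Q$ using that the relevant $\mathbb{E}$-triangles in $\xi$ stay exact under $\mathcal{C}(T_j,-)$ because each $T_j$ is $\xi$-projective); and for $(5)\Rightarrow(1)$ a dimension-shifting argument that, in the end, is exactly the content of the cited \cite[Theorem 3.8]{HZZ}, so you are partially re-deriving what the paper simply quotes. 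Two small cleanups: in $(5)\Rightarrow(1)$ you should dimension-shift along a $\xi$-\emph{projective} resolution of $M$ (not a $\xi$-$\mathcal{G}$projective one) to reach the $n$th syzygy $K_n$ and show $\xi{\rm xt}^1_\xi(K_n,Q)\cong\xi{\rm xt}^{n+1}_\xi(M,Q)=0$; and your worry about a low-degree edge case in $(2)\Rightarrow(3)$ is unfounded even for $n=0$, since at $i=1$ the vanishing of $\xi{\rm xt}^1_{\mathcal{GP}(\xi)}$ and $\xi{\rm xt}^2_{\mathcal{GP}(\xi)}$ already forces $\widetilde{\varepsilon}^1_{\mathcal{P}}$ to be injective and surjective respectively. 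What your route buys is a self-contained proof that makes the role of Theorem \ref{AM} and of complete cohomology explicit; what the paper's route buys is brevity, at the price of leaning on the companion papers.
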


\begin{proof}
(1) $\Leftrightarrow$ (3) follow from \cite[Proposition 3.7]{HZZZ2}, and (1) $\Leftrightarrow$ (4)  $\Leftrightarrow$ (5) follow from \cite[Theorem 3.8]{HZZ}.

(1) $\Rightarrow$ (2) is clear.

(2) $\Rightarrow$ (3) follows from Theorem \ref{AM} directly.
\end{proof}

\begin{cor}
Assume that
 $\xi\mbox{-}\mathcal{G}{\rm pd}M=n<\infty$. Then there are homomorphisms natural in $M$ and $N$,
 such that the following sequence
 \begin{multline*}
   \xymatrix@C=0.5cm{0\ar[r]&K\ar[r]&{\xi}{\rm xt}^1_{\mathcal{GP}(\xi)}(M,N)\ar[r]&{\xi}{\rm xt}^1_{\xi}(M,N)\ar[r]&\widetilde{{\xi}{\rm xt}}^1_{\mathcal{P}}(M,N)\ar[r]&{\xi}{\rm xt}^2_{\mathcal{GP}(\xi)}(M,N)\ar[r]&\cdots} \\
   \xymatrix@C=0.5cm{\cdots\ar[r]&\widetilde{{\xi}{\rm xt}}^{n-1}_{\mathcal{P}}(M,N)\ar[r]&{\xi}{\rm xt}^n_{\mathcal{GP}(\xi)}(M,N)\ar[r]&{\xi}{\rm xt}^n_{\xi}(M,N)\ar[r]&\widetilde{{\xi}{\rm xt}}^n_{\mathcal{P}}(M,N)\ar[r]&0}
 \end{multline*}
 is exact.
\end{cor}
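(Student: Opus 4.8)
The plan is to deduce this corollary from Theorem \ref{AM} by truncating the long exact sequence produced there, the key point being that when $\xi\mbox{-}\mathcal{G}{\rm pd}M=n<\infty$ the $\xi$-Gorenstein cohomology of $M$ vanishes in degrees $>n$.

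First I would check that $M$ admits a $\xi$-complete resolution, so that Theorem \ref{AM} applies. By \cite[Proposition 5.5]{HZZ} there is an $\mathbb{E}$-triangle $K_1\to G_0\to M\dashrightarrow$ in $\xi$ with $G_0\in\mathcal{GP}(\xi)$ and $\xi\mbox{-}{\rm pd}K_1\leq n-1$. Since $G_0$ is $\xi$-$\mathcal{G}$projective, it occurs as a cycle object in some complete $\xi$-projective resolution $\mathbf{T}$; pasting $\mathbf{T}$, via a horseshoe-type argument using the above $\mathbb{E}$-triangle, onto a finite $\xi$-projective resolution of $K_1$ produces a $\xi$-complete resolution $\mathbf{T}\to\mathbf{P}\to M$ in the sense of Definition \ref{df:3.2}. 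Hence Theorem \ref{AM} supplies the (unbounded) exact sequence together with the homomorphisms natural in $M$ and $N$.

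Next I would record the vanishing. Splicing the finite $\xi$-projective resolution of $K_1$ below $G_0$ in the $\mathbb{E}$-triangle $K_1\to G_0\to M\dashrightarrow$ gives, as in the construction of finite-length $\xi$-$\mathcal{G}$projective resolutions in Section 3, a $\xi$-$\mathcal{G}$projective resolution $\mathbf{G}\to M$ of length $\leq n$, so that $\xi{\rm xt}^i_{\mathcal{GP}(\xi)}(M,N)={\rm H}^i\mathcal{C}(\mathbf{G},N)=0$ for all $i\geq n+1$ and all $N\in\mathcal{C}$ (using that $\xi{\rm xt}^i_{\mathcal{GP}(\xi)}(M,-)$ is independent of the chosen $\xi$-$\mathcal{G}$projective resolution, as noted after the comparison lemma in Section 3). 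Feeding this into the exact sequence of Theorem \ref{AM}: the segment running from $0\to K\to\xi{\rm xt}^1_{\mathcal{GP}(\xi)}(M,N)$ up through $\xi{\rm xt}^n_{\xi}(M,N)\to\widetilde{\xi{\rm xt}}^n_{\mathcal{P}}(M,N)$ is exactly the asserted sequence up to its last nonzero term; and exactness of the Theorem \ref{AM} sequence at $\widetilde{\xi{\rm xt}}^n_{\mathcal{P}}(M,N)$, together with $\xi{\rm xt}^{n+1}_{\mathcal{GP}(\xi)}(M,N)=0$, forces the map $\xi{\rm xt}^n_{\xi}(M,N)\to\widetilde{\xi{\rm xt}}^n_{\mathcal{P}}(M,N)$ to be surjective. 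Therefore appending $\to 0$ keeps the sequence exact, and naturality in $M$ and $N$ is inherited from Theorem \ref{AM}.

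The only genuine obstacle is the first step: confirming that an object of finite $\xi$-$\mathcal{G}$projective dimension admits a $\xi$-complete resolution, so that Theorem \ref{AM} is applicable. Once this is granted, the remainder is a formal vanishing-and-truncation argument that requires no new homological input.
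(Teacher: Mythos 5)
Your proposal is correct and is exactly the argument the paper intends (the corollary is stated without proof there): apply Theorem \ref{AM} and truncate using the vanishing $\xi{\rm xt}^i_{\mathcal{GP}(\xi)}(M,N)=0$ for $i\geq n+1$, which is item (2) of the preceding corollary, with surjectivity of the last map forced by exactness at $\widetilde{\xi{\rm xt}}^n_{\mathcal{P}}(M,N)$. The only step deserving a citation rather than your sketched horseshoe construction is the existence of a $\xi$-complete resolution (with $\nu_i$ an isomorphism for $i\geq n$) for an object of finite $\xi$-$\mathcal{G}$projective dimension, which is established in \cite{HZZZ}.
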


Assume that $M\in\widetilde{\mathcal{GP}}(\xi)$ and $N\in\widetilde{\mathcal{GI}}(\xi)$. By Theorem \ref{thm:balance-of-gorenstein}, we have
$$
\xi{\rm xt}_{\mathcal{GP}(\xi)}^n(M,N)\cong \xi{\rm xt}_{\mathcal{GI}(\xi)}^n(M,N)
$$
for any $n\geq 1$, which is denoted by $\xi{\rm xt}_{\mathcal{G}(\xi)}^n(M,N)$.

By \cite[Proposition 4.3]{HZZZ2}, for any $M\in\widetilde{\mathcal{GP}}(\xi)$ and $N\in\widetilde{\mathcal{GI}}(\xi)$, we also have $$
\widetilde{\xi{\rm xt}}_{\mathcal{P}(\xi)}^n(M,N)\cong \widetilde{\xi{\rm xt}}_{\mathcal{I}(\xi)}^n(M,N)
$$
and we denote it by $\widetilde{\xi{\rm xt}}_{\xi}^n(M,N)$ for any integer $n\geq1$.

\begin{cor}
Assume that $M\in\widetilde{\mathcal{GP}}(\xi)$ and $N\in\widetilde{\mathcal{GI}}(\xi)$. Let $n=\min\{\xi\mbox{-}\mathcal{G}{\rm pd}M,\xi\mbox{-}\mathcal{G}{\rm id}N\}$. Then there are homomorphisms natural in $M$ and $N$,
 such that the following sequence
 \begin{multline*}
   \xymatrix@C=0.5cm{0\ar[r]&K\ar[r]&{\xi}{\rm xt}^1_{\mathcal{G}(\xi)}(M,N)\ar[r]&{\xi}{\rm xt}^1_{\xi}(M,N)\ar[r]&\widetilde{{\xi}{\rm xt}}^1_{\xi}(M,N)\ar[r]&{\xi}{\rm xt}^2_{\mathcal{G}(\xi)}(M,N)\ar[r]&\cdots} \\
   \xymatrix@C=0.5cm{\cdots\ar[r]&\widetilde{{\xi}{\rm xt}}^{n-1}_{\xi}(M,N)\ar[r]&{\xi}{\rm xt}^n_{\mathcal{G}(\xi)}(M,N)\ar[r]&{\xi}{\rm xt}^n_{\xi}(M,N)\ar[r]&\widetilde{{\xi}{\rm xt}}^n_{\xi}(M,N)\ar[r]&0}
 \end{multline*}
 is exact.
\end{cor}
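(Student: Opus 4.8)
The plan is to deduce the asserted sequence from the Avramov--Martsinkovsky sequence of the preceding Corollary by rewriting its $\mathcal{GP}$-cohomology and complete-cohomology terms in their balanced forms, and then to cut the sequence off at the right degree using the finiteness of $\xi\mbox{-}\mathcal{G}{\rm id}N$. Write $m=\xi\mbox{-}\mathcal{G}{\rm pd}M$ and $\ell=\xi\mbox{-}\mathcal{G}{\rm id}N$, so that $n=\min\{m,\ell\}$; we may assume $n\geq 1$, the case $n=0$ being trivial.

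Since $M\in\widetilde{\mathcal{GP}}(\xi)$, the preceding Corollary provides an exact sequence, with all maps natural in $M$ and $N$, of the form $0\to K\to \xi{\rm xt}^1_{\mathcal{GP}(\xi)}(M,N)\to \xi{\rm xt}^1_{\xi}(M,N)\to \widetilde{\xi{\rm xt}}^1_{\mathcal{P}}(M,N)\to\cdots\to \xi{\rm xt}^m_{\xi}(M,N)\to \widetilde{\xi{\rm xt}}^m_{\mathcal{P}}(M,N)\to 0$. Because in addition $N\in\widetilde{\mathcal{GI}}(\xi)$, Theorem \ref{thm:balance-of-gorenstein} gives natural isomorphisms $\xi{\rm xt}^i_{\mathcal{GP}(\xi)}(M,N)\cong\xi{\rm xt}^i_{\mathcal{G}(\xi)}(M,N)$ and \cite[Proposition 4.3]{HZZZ2} gives natural isomorphisms $\widetilde{\xi{\rm xt}}^i_{\mathcal{P}}(M,N)\cong\widetilde{\xi{\rm xt}}^i_{\xi}(M,N)$, both valid for every $i\geq 1$. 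Every $\xi{\rm xt}^i_{\mathcal{GP}(\xi)}$-term and every $\widetilde{\xi{\rm xt}}^i_{\mathcal{P}}$-term occurring in that sequence has $i\geq 1$, so replacing each of them along these isomorphisms---and leaving the terms $K$ and $\xi{\rm xt}^i_{\xi}(M,N)$ unchanged, the latter being independent of any choice of resolution---produces a sequence of the required shape with natural maps; it remains exact because each new map is a composite of an old one with isomorphisms. If $m=n$, this is already the assertion.

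Suppose now that $n=\ell<m$. Since $N$ has $\xi\mbox{-}\mathcal{G}{\rm id}N=\ell<\infty$, it admits a $\xi$-$\mathcal{G}$injective coresolution of length $\ell$, whence $\xi{\rm xt}^i_{\mathcal{GI}(\xi)}(M,N)=0$ for all $i>\ell$, and hence $\xi{\rm xt}^{\ell+1}_{\mathcal{GP}(\xi)}(M,N)\cong\xi{\rm xt}^{\ell+1}_{\mathcal{GI}(\xi)}(M,N)=0$ by the balance isomorphism. In the sequence of the preceding Corollary the map $\widetilde{\xi{\rm xt}}^{\ell}_{\mathcal{P}}(M,N)\to \xi{\rm xt}^{\ell+1}_{\mathcal{GP}(\xi)}(M,N)$ is then zero, so exactness at $\widetilde{\xi{\rm xt}}^{\ell}_{\mathcal{P}}(M,N)$ forces $\xi{\rm xt}^{\ell}_{\xi}(M,N)\to \widetilde{\xi{\rm xt}}^{\ell}_{\mathcal{P}}(M,N)$ to be an epimorphism; consequently the sequence truncated right after $\widetilde{\xi{\rm xt}}^{\ell}_{\mathcal{P}}(M,N)$ is still exact. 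Rewriting it via the balance isomorphisms exactly as above (still only in degrees $\geq 1$, and $\ell\geq 1$) yields the asserted sequence, which now ends in $\xi{\rm xt}^{n}_{\xi}(M,N)\to \widetilde{\xi{\rm xt}}^{n}_{\xi}(M,N)\to 0$.

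Naturality of the resulting homomorphisms in $M$ and $N$ is inherited from that in the preceding Corollary together with the naturality of the balance isomorphisms, and $K$ may be taken to be $\ker\!\big(\xi{\rm xt}^1_{\mathcal{G}(\xi)}(M,N)\to\xi{\rm xt}^1_{\xi}(M,N)\big)$. I do not expect a genuine obstacle here: the substantial part---constructing the $\xi$-complete resolution, the complex $\mathbf{L}$, and the connecting homomorphisms---has already been carried out in Theorem \ref{AM} and the preceding Corollary, so what remains is essentially to splice those together with the balance statements of Theorem \ref{thm:balance-of-gorenstein} and \cite[Proposition 4.3]{HZZZ2}. The only point that genuinely needs care is the location of the truncation: one must use the $\xi$-$\mathcal{G}$projective bound to stop the sequence at degree $m$ and the $\xi$-$\mathcal{G}$injective bound (transported through the balance isomorphism so as to kill $\xi{\rm xt}^{\ell+1}_{\mathcal{GP}(\xi)}(M,N)$) to stop it at degree $\ell$, so that it terminates precisely at $n=\min\{m,\ell\}$; and the degenerate case $n=0$ should be dealt with separately.
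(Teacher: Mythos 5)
Your proposal is correct and is essentially the argument the paper intends (the paper states this corollary without a written proof, as an immediate consequence of the preceding corollary together with Theorem \ref{thm:balance-of-gorenstein} and the cited balance of complete cohomology). Your truncation step --- using $\xi{\rm xt}^{\ell+1}_{\mathcal{GI}(\xi)}(M,N)=0$ transported through the balance isomorphism to force surjectivity of $\xi{\rm xt}^{\ell}_{\xi}(M,N)\to\widetilde{\xi{\rm xt}}^{\ell}_{\mathcal{P}}(M,N)$ when $\ell<m$ --- is exactly the point that makes the sequence terminate at $n=\min\{m,\ell\}$, and it is handled correctly.
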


\vspace{4mm}
\small

\hspace{-1.2em}\textbf{Jiangsheng Hu}\\
School of Mathematics and Physics, Jiangsu University of Technology,
 Changzhou 213001, China\\
E-mail: jiangshenghu@jsut.edu.cn\\[1mm]
\textbf{Dongdong Zhang}\\
Department of Mathematics, Zhejiang Normal University,
 Jinhua 321004, China\\
E-mail: zdd@zjnu.cn\\[1mm]
\textbf{Tiwei Zhao}\\
School of Mathematical Sciences, Qufu Normal University, Qufu 273165, China\\
E-mail: tiweizhao@qfnu.edu.cn\\[1mm]
\textbf{Panyue Zhou}\\
College of Mathematics, Hunan Institute of Science and Technology, Yueyang 414006, China\\
E-mail: panyuezhou@163.com

\end{document}